\documentclass[12pt]{amsart}
\usepackage{epsfig}
\usepackage{amssymb}
\usepackage{xcolor}
\usepackage{tikz}
\usetikzlibrary{arrows.meta}
\usepackage{float}
\usepackage{graphicx}
\addtolength{\topmargin}{-0.04\textheight}
\addtolength{\textwidth}{0.3\textwidth}
\setlength{\oddsidemargin}{-0.04in}
\setlength{\evensidemargin}{-0.04in} \pretolerance=5000
\usepackage[pdfstartview=FitH,
            CJKbookmarks=true,
            bookmarksnumbered=true,
            bookmarksopen=true,
            colorlinks=true,
            linkcolor=blue,
            anchorcolor=blue,
            citecolor=red,
            urlcolor=blue
            ]{hyperref}
\newtheorem{thm}{Theorem}[section]
\newtheorem{deff}[thm]{Definition}
\newtheorem{lem}[thm]{Lemma}
\newtheorem{cor}[thm]{Corollary}

\newtheorem{prp}[thm]{Proposition}

\begin{document}

\title[Ulam floating functions]{Ulam floating functions}

\author{Chunyan Liu, Elisabeth M. Werner, Deping Ye, and Ning Zhang}

\address{Chunyan Liu, School of Mathematics and Statistics, Huazhong University of Science and Technology, 1037 Luoyu Road, Wuhan, Hubei 430074, China}

\email{chunyanliu@hust.edu.cn}

\address{Elisabeth M. Werner, Department of Mathematics, Case Western Reserve University, Cleveland, OH 44106, USA}

\email{elisabeth.werner@case.edu}

\address{Deping Ye, Department of Mathematics and Statistics, Memorial University of Newfoundland, St.\ John's, Newfoundland, Canada A1C 5S7 }

\email{deping.ye@mun.ca}

\address{Ning Zhang, School of Mathematics and Statistics, Huazhong University of Science and Technology, 1037 Luoyu Road, Wuhan, Hubei 430074, China}
\email{nzhang2@hust.edu.cn}

\keywords{Affine surface area,  floating body, log-concave functions, Ulam floating function, Ulam floating set.\\ Chunyan Liu and Ning Zhang have been supported by NSF of China (No.
11901217 and No. 11971005).
Elisabeth M. Werner has been supported by NSF grant DMS-2103482.
 Deping Ye  has been supported by an NSERC grant, Canada.}

 \begin{abstract} 
 We extend the notion of Ulam floating sets from convex bodies to Ulam floating functions. We use the  Ulam floating functions to derive a new variational formula 
for the  affine surface area of log-concave functions.

\vskip 2mm \noindent 
{\em 2020 Mathematics Subject Classification:}  52A20, 52A38, 52A41.

\end{abstract}

\maketitle

\section{Introduction}
The study of  affine surface area  was initiated by Blaschke \cite{Blaschke:1923} for smooth convex bodies
in Euclidean space of dimension two and three, and extended to $\mathbb{R}^n$  and general convex bodies by Leichtweiss \cite{Leichtweis:1986},  Lutwak \cite{Lutwak:1996},
and Sch{\"u}tt and Werner \cite{Werner:1990}.
The affine surface area has  remarkable properties. Aside from affine invariance and translation invariance, we
mention only the affine isoperimetric inequality which is a powerful tool in locating extremizers. Therefore  it is not surprising that the 
affine surface area has proved
to be  useful in many problems, e.g.,  Plateau problems
\cite{Trudinger:2000, Trudinger:2002, Trudinger:2008}, the 
approximation theory of convex bodies by polytopes
\cite{ Boroczky:2000, Werner:2018, Werner:2013, Reitzner:2002, Schutt:1991, Schutt:2003} 
and  affine curvature flow \cite{Andrews:1999, Ivaki:2015, Ivaki:2013,  Stancu:2002, Stancu:2003}. 
\par
Blaschke used Dupin's \cite{Dupin:1822} notion of the floating body for his definition of the affine surface area. Dupin's floating body needs not to be convex.
Therefore, Sch{\"u}tt and Werner in \cite{Werner:1990}  used   the (convex) floating body, introduced independently in \cite{Larman:1998, Werner:1990}, 
in their definition of the affine surface area. 
\par
An isomorphic variant of the (convex) floating body is  the metronoid introduced by Huang  and  Slomka \cite{Huang:2019a}.   The metronoid is also called the  Ulam floating body as it is  intimately related to Ulam's long-standing floating body problem which asks whether Euclidean balls are the only convex bodies
that  float in equilibrium in any orientation.  A negative answer to this problem was recently given by  Ryabogin \cite{Ryabogin}. A  close connection between Ulam floating bodies and  the affine surface area was  proved by Huang,  Slomka and Werner \cite{Huang:2019}.
 
In recent years, considerable effort has been devoted to develop a  geometric theory of log-concave functions. A major goal in this area is to extend notions  from convex geometry to a functional setting.  For the functional analogue of the (convex) floating body, this was achieved by  Li,  Sch{\"u}tt and  Werner in \cite{Werner:2019}.  
They introduced  the notion of floating  function and used it to define  an affine surface area for log-concave functions  $f=e^{-\psi}$, $\psi$ convex, as follows \cite{Werner:2019}:
\begin{align}\label{affine-def-f-f}
as(f) =\int_{\mathbb{R}^n}\big(\det(\nabla^2\psi(x))\big)^{\frac{1}{n+2}}e^{-\psi(x)} dx,
\end{align}  
where $\det(\nabla^2\psi(x))$ denotes the determinant of $\nabla^2\psi$, the Hessian of the convex function $\psi$. In Section \ref{asa}, we  explain why it is natural to call this expression  affine surface area of $f$. 
A slightly different definition of the affine surface area for log-concave functions was given  in
\cite{Caglar:2016}.   
 
It is thus  natural to ask  whether the notion of the Ulam floating body can also be extended to a functional setting and whether a connection to the 
 functional affine surface area can be established.  This is carried out in this paper. 
 
In Section \ref{ul-3}, we define the Ulam floating  
functions $\mathsf{M}_{\delta}(\psi)$ and $U_{\delta}(f)=e^{-\mathsf{M}_{\delta}(\psi)}$  for a convex function $\psi$ and  a log-concave function
$f=e^{-\psi}$.
Taking  right-derivatives of the integral difference of a log-concave function  and its Ulam floating function
 gives rise to the affine surface area of the  log-concave function. This is  main result in this paper. 
One of the difficulties when dealing with functions instead of convex bodies is that the convex epigraph of a convex function is not necessarily bounded anymore.
\vskip 2mm
\begin{thm}\label{mainthem2}
	Let $\psi:\mathbb{R}^n\to\mathbb{R}$ be a  convex function such that $
	0<\int_{\mathbb{R}^n}e^{-\psi(x)}dx<\infty$. Then
\vskip 2mm
\noindent  
\begin{align*} 
\lim_{\delta\to0^+} \bigg(\delta^{-\frac{2}{n+2}} \int_{\mathbb{R}^n}(e^{-\psi(x)}-e^{-\mathsf{M}_{\delta}(\psi)(x)})\,dx\bigg)&=  \lim_{\delta\to0^+} \bigg(\delta^{-\frac{2}{n+2}}  \int_{\mathbb{R}^n}\Big|
	\mathsf{M}_{\delta}(\psi)(x)-\psi(x)\Big|e^{-\psi(x)}\,dx\bigg) \nonumber \\&=c_{n+1}\int_{\mathbb{R}^n}\big(\det(\nabla^2\psi(x))\big)^{\frac{1}{n+2}}e^{-\psi(x)}dx, 
\end{align*} where $\,dx$ is the Lebesgue measure on $\mathbb{R}^n$ and
$c_{n+1}$ is a constant given by 
\begin{equation}\label{003}
	c_{n+1}=\frac{n+2}{2(n+4)}\Big(\frac{n+2}{\mathsf{vol}_n(\mathbf{B}_2^n)}\Big)^{\frac{2}{n+2}},
\end{equation} and  $\mathsf{vol}_n(\mathbf{B}_2^n)$ is the volume of $\mathbf{B}_2^n$, the Euclidean unit ball centered at the origin in $\mathbb{R}^n$.    
 \end{thm}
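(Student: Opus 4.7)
The plan is to prove the theorem in two main stages: first establish a pointwise asymptotic expansion of $\mathsf{M}_{\delta}(\psi)(x) - \psi(x)$ as $\delta \to 0^+$ at almost every $x$, then pass to the limit inside the integrals by dominated convergence. The chain of equalities in the statement then reduces to comparing two integrals whose integrands differ only by a quadratic Taylor remainder. A feature that guides every step is the exponent $2/(n+2)$: it signals that the Ulam floating operation on $\psi$ behaves like a floating body in $\mathbb{R}^{n+1}$ (the ambient dimension of the epigraph), hence the standard convex-geometric exponent $2/((n+1)+1)$.

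\textbf{Pointwise asymptotics.} By Alexandrov's theorem, the convex $\psi$ is twice differentiable almost everywhere. Fixing such a point $x_0$ and using affine invariance, I would reduce to the normalization $x_0=0$, $\psi(0)=0$, $\nabla\psi(0)=0$, so that $\psi(y) = \tfrac{1}{2}\langle A y, y\rangle + o(|y|^2)$ with $A = \nabla^2\psi(0)$. The reduction of $\mathsf{M}_{\delta}(\psi)(0)-\psi(0)$ to the corresponding quantity for the osculating paraboloid should, after diagonalizing $A$ and rescaling coordinates so that $A$ becomes the identity, produce
\[
\mathsf{M}_{\delta}(\psi)(0) - \psi(0) = c_{n+1}\,\delta^{\frac{2}{n+2}}\bigl(\det A\bigr)^{\frac{1}{n+2}} + o\bigl(\delta^{\frac{2}{n+2}}\bigr),
\]
where $c_{n+1}$ arises from evaluating the relevant extremal problem on the paraboloid (equivalently, from computing the volume of a spherical cap of $\mathbf{B}_2^{n+1}$, giving the factor $\mathsf{vol}_n(\mathbf{B}_2^n)$ in \eqref{003}). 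Key to this step is the explicit definition of $\mathsf{M}_{\delta}(\psi)$ from Section~\ref{ul-3}, which I would show depends on $\psi$ only through second-order data at $x_0$ in the limit $\delta\to 0^+$.

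\textbf{Passing to the limit.} To obtain
\[
\lim_{\delta\to 0^+}\delta^{-\frac{2}{n+2}}\int_{\mathbb{R}^n}\bigl(\mathsf{M}_{\delta}(\psi)(x) - \psi(x)\bigr)\,e^{-\psi(x)}\,dx = c_{n+1}\int_{\mathbb{R}^n}\bigl(\det \nabla^2\psi(x)\bigr)^{\frac{1}{n+2}} e^{-\psi(x)}\,dx,
\]
I would use dominated convergence. The required dominating function must bound $\delta^{-\frac{2}{n+2}}(\mathsf{M}_{\delta}(\psi) - \psi)\, e^{-\psi}$ uniformly in small $\delta$. The natural candidate is built from the monotonicity of $\delta\mapsto\mathsf{M}_{\delta}(\psi)$ (bounding by the value at some fixed $\delta_0$) combined with a global quadratic majorant inherited from convexity of $\psi$; the hypothesis $\int e^{-\psi}<\infty$ forces superlinear growth of $\psi$ away from the essential support of $f$, making such a majorant integrable. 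The first equality in the theorem then follows from the pointwise identity
\[
e^{-\psi} - e^{-\mathsf{M}_{\delta}(\psi)} = e^{-\psi}\bigl(\mathsf{M}_{\delta}(\psi) - \psi\bigr) - \tfrac{1}{2}e^{-\xi}\bigl(\mathsf{M}_{\delta}(\psi) - \psi\bigr)^2
\]
with $\xi$ between $\psi$ and $\mathsf{M}_{\delta}(\psi)$: since $\mathsf{M}_{\delta}(\psi)\ge\psi$ and the difference is $O(\delta^{2/(n+2)})$, the remainder divided by $\delta^{2/(n+2)}$ is itself $O(\delta^{2/(n+2)})$ pointwise and vanishes in integral form by the same dominated-convergence argument.

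\textbf{Main obstacle.} The central difficulty is the pointwise step: making rigorous that $\mathsf{M}_{\delta}(\psi)(x_0)$ is, to leading order in $\delta$, determined only by $\nabla^2\psi(x_0)$. This requires localizing the (possibly non-local) defining variational problem for $\mathsf{M}_{\delta}(\psi)$ to a small neighborhood of $x_0$ and then controlling the $o(|y|^2)$ correction in the Alexandrov expansion uniformly enough to extract the exact constant $c_{n+1}$. A secondary but nontrivial obstacle is constructing the integrable dominating function under the minimal assumption $0<\int e^{-\psi}<\infty$, where $\psi$ need not be smooth nor have controlled growth beyond what log-integrability forces; here I expect that the general geometric comparison between $\mathsf{M}_{\delta}$ and a convex-floating analogue, together with a result of \cite{Werner:2019}-type, will suffice.
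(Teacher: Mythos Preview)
Your high-level architecture---pointwise asymptotics at Alexandrov points followed by dominated convergence, then the Taylor-remainder comparison of the two integrals---is exactly the paper's strategy. A few of your intermediate moves, however, would not go through as written.

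\emph{Dominating function.} Your first candidate, ``bound $\mathsf{M}_{\delta}(\psi)-\psi$ by its value at a fixed $\delta_0$ using monotonicity,'' fails: after dividing by $\delta^{2/(n+2)}$ the bound blows up as $\delta\to 0^+$. The paper instead uses precisely the fallback you mention at the end: it compares $\mathsf{M}_{\delta}(\psi)$ with the convex floating function $\psi_{\delta}$ of \cite{Werner:2019} (noting $(\mathsf{epi}(\psi))_{\delta}\subseteq\mathsf{M}_{\delta}(\mathsf{epi}(\psi))$, hence $\mathsf{M}_{\delta}(\psi)\leq\psi_{\delta}$) and then invokes the rolling-function bound of \cite[Lemma~5]{Werner:2019},
\[
0\leq \frac{\mathsf{M}_{\delta}(\psi)(x)-\psi(x)}{\delta^{2/(n+2)}}\leq \alpha_n\,\frac{(1+\|\nabla\psi(x)\|^2)^{1/2}}{r_{\psi}(x)^{n/(n+2)}},
\]
whose integrability against $e^{-\psi}$ is \cite[Lemma~8]{Werner:2019}. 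Also, a small correction: $\int e^{-\psi}<\infty$ only forces \emph{linear} growth $\psi(x)\geq a\|x\|+b$, not superlinear growth; the integrability of the majorant genuinely requires the rolling-function machinery, not growth of $\psi$ alone.

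\emph{Pointwise step.} The paper does not use the osculating paraboloid directly; it sandwiches $\mathsf{epi}(\psi)$ locally between Euclidean balls $\mathcal{B}(\varepsilon^{\pm})$ of radii $(1\pm\varepsilon)\rho$ with $\rho=\kappa_{\psi}(z_x)^{-1/n}$, transfers this to the Ulam floating sets, and reads off the constant $c_{n+1}$ from an explicit computation on balls (the ratio $\Delta_h/\Delta_\rho\to(n+4)/(n+2)$ in Lemma~\ref{11}). The localization you flag as the main obstacle is handled by Lemma~\ref{lem: UnaffectedCut}, which says that $\mathsf{M}_{\delta}(\cdot)$ near a boundary point is unaffected by truncating far away; this is the concrete tool you are missing. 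Finally, the paper treats the case $\det\nabla^2\psi(x)=0$ separately (the Dupin indicatrix degenerates to an elliptic cylinder), approximating by ellipsoids with some axes of length $>\varepsilon^{-1}$ to show the ratio tends to $0$; your sketch does not address this, and it is not automatic from the positive-definite case.
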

\vskip 4mm
The paper is organized as follows. Section \ref{asa} will provide  background and notations. Ulam floating functions will be introduced in Section \ref{ul-3}, where we also prove some of their basic properties.  The proof of the  relation between  the Ulam floating function and the  affine surface area (i.e., Theorem \ref{mainthem2}),  will be established in Section \ref{ul-4}.

\section{Background and notations} \label{asa}

In this section, we collect  background and notations that will be used throughout the paper. We refer the reader to the books 
\cite{Gardner:2006, Rockafellar:1970, Rockafellar:1998, Schneider:2014}, 
and the articles   \cite{Larman:1998, Huang:2019a, Huang:2019,  Werner:2019,  Werner:1990} for more details.

Let $\mathbb{R}^n$, $n\geq 2$,  be the $n$-dimensional Euclidean space,  $\|x\|$ be the Euclidean norm of $x\in \mathbb{R}^n$, and  $\langle x, y\rangle$ be the inner product of $x, y\in \mathbb{R}^n$. The standard basis of $\mathbb{R}^n$ is denoted by $\{e_1, \cdots, e_n\}$.   By $\partial E$, $E^c$ and $\mathsf{int}(E)$, we mean the  
boundary, complement and interior of  $E\subset \mathbb{R}^n$, respectively. The distance  between $x\in \mathbb{R}^n$ and $E\subset \mathbb{R}^n$ is defined by $$ \mathsf{dist}\big(x, E\big)=\inf_{y\in E}\|x-y\|.$$
Let $\mathbf{B}_2^{n}(x,\rho)$ stand for the closed Euclidean ball in $\mathbb{R}^n$ centered at $x$ with radius $\rho$. In particular, we write in short $\mathbf{B}_2^n=\mathbf{B}_2^n(o,1)$ for the
Euclidean unit ball centered at the origin $o$, and $\mathbb{S}^{n-1}=\partial\mathbf{B}_2^n$ for the unit sphere.  We would like to point out that the notation $o$ always means the origin but its dimension may vary in the later context.  Let  $[x,y)$ denote the ray from $x\in\mathbb{R}^n$ (inclusively) to $y \in\mathbb{R}^n$. 
\par
Let $\mathfrak{C}$ be the collection of  convex functions  $\psi:\mathbb{R}^n\to\mathbb{R}$.
Denote by   $\nabla\psi$ and  $\nabla^2\psi$   the gradient and the  Hessian of $\psi$, respectively. 
 Throughout this paper,  we say that $f: \mathbb{R}^n\to (0, \infty)$ is a
  log-concave function if $f=e^{-\psi}$ with  $\psi\in \mathfrak{C}$. It is well-known that $\nabla\psi$ exists almost everywhere by Rademacher's theorem in \cite{Borwein:2010}, and  $\nabla^2\psi$ exists almost everywhere in $\mathbb{R}^n$   by Alexandrov \cite{Alexandroff:1939} and Busemann and Feller in
\cite{Buse-Feller:1936}. 
\par
A vector $y\in \mathbb{R}^n$ is said to be a subgradient of $\psi$ at $x_0 \in \mathbb{R}^n$ if $\psi(z)-\psi(x_0)\geq \langle y, z-x_0\rangle$ for all $z\in \mathbb{R}^n$. For a convex function $\psi\in \mathfrak{C}$,  the subgradient exists at every point in $\mathbb{R}^n$.  
We say that $\psi\in \mathfrak{C}$ is twice differentiable in a generalized sense at $x_0\in \mathbb{R}^n$ (see e.g.,  \cite{Schutt:2003}), if there exists a linear map $D^2\psi(x_0): \mathbb{R}^n\to \mathbb{R}^n$ such that, for all subgradients $\partial \psi$ of $\psi$,
\begin{align} \|\partial \psi(x)-\partial\psi(x_0)-\nabla^2\psi(x_0)(x-x_0)\| \leq \omega(\|x-x_0\|) \|x-x_0\|, \label{dupin-1}\end{align}  holds in a neighborhood $\mathcal{U}(x_0)\subseteq \mathbb{R}^n$,  where $\omega(\cdot): (0, \infty)\rightarrow (0, \infty)$ is a properly  chosen function with $\lim_{t\to 0^+} \omega(t)=0$. In this case, we call $\nabla^2\psi(x_0)$ the {\em generalized Hessian matrix} of $\psi$ at $x_0$. 
\par
Let $\theta\in \mathbb{S}^{n-1}$ and $a\in\mathbb{R}$. Then $$\mathsf{H}(\theta,a)=\{x\in\mathbb{R}^n:\langle x,\theta \rangle=a\} $$  is a hyperplane with normal vector $\theta$. The hyperplane  $\mathsf{H}(\theta,a)$ defines  two 
 closed half spaces   
\[
\mathsf{H}^{+}(\theta,a)=\{x\in\mathbb{R}^n:\langle x,\theta \rangle\ge a\} \ \ \mathrm{and} \ \ 
\mathsf{H}^{-}(\theta,a)=\{x\in\mathbb{R}^n:\langle x,\theta\rangle\leq a\}.
\] We often  write $\mathsf{H}$, $\mathsf{H}^{+},$ and $\mathsf{H}^{-}$ instead of $\mathsf{H}(\theta, a)$, $\mathsf{H}^{+}(\theta, a),$ and $\mathsf{H}^{-}(\theta, a)$ if no confusion occurs. 
  
 A subset  $K$ of  $\mathbb{R}^n $ is a convex body if  $K$ is a compact convex set with non-empty interior. The support function of a convex body $K$, $h_K:  \mathbb{S}^{n-1} \rightarrow \mathbb{R}$, is defined by 
$$h_{K}(\theta)=\max_{x\in K} \langle x,\theta \rangle  \   \ \mathrm{for}\ \ \ \theta\in\mathbb{S}^{n-1}.$$    Note that any convex body $K$ is  uniquely determined by its support function. By $N_{K}(z)$, $\kappa_{K}(z)$,  and $\mu_{\partial K}$,  we mean the unit outer normal at $z\in\partial K$,   
the (generalized) Gauss curvature at
$z\in\partial K$, and  the surface area measure on $\partial K$, respectively. Also $\mathsf{vol}_n(E)$ stands for the $n$-dimensional volume of $E\subset \mathbb{R}^n$. 
 
An important notion in affine,  convex and differential geometry is the {\em affine surface area}, which was  introduced by Blaschke \cite{Blaschke:1923}
in dimensions $2$ and $3$ for smooth enough convex bodies.
For a  convex body  $K\subset \mathbb{R}^n$, it is defined as    
  \begin{equation}\label{02}
	as(K)=\int_{\partial K}(\kappa_{K}(z))^{\frac{1}{n+1}}d\mu_{\partial K}(z). 
\end{equation}  
\vskip 2mm
It was shown in \cite{Werner:1990} that the affine surface area can be obtained by a first order variation of the volume of $K$ via a family of convex floating bodies. Let $K$ be a convex body in $\mathbb{R}^n$ and $0<\delta<\mathsf{vol}_n(K)$ be small enough. The  convex floating body $K_{\delta}$ of $K$ is a variant of Dupin's floating body \cite{Dupin:1822} and was introduced in \cite{Larman:1998, Werner:1990} as  the intersection of all
halfspaces $\mathsf{H}^{+}$ whose defining hyperplanes $\mathsf{H}$ cut off sets of volume $\delta$ from $K$, i.e.,   
\begin{align}\label{convex-f}
K_{\delta}=\bigcap_{\{\mathsf{H}:\ \mathsf{vol}_n(\mathsf{H}^{-}\cap K)=\delta\}}\mathsf{H}^{+}.
\end{align}
It has been proved in \cite{Werner:1990} that \begin{equation} as(K)=d_n^{-1} \lim_{\delta\rightarrow 0^+} \frac{\mathsf{vol}_n(K)-\mathsf{vol}_n(K_\delta)}{\delta^{\frac{2}{n+1}}}, \label{affine-1} \end{equation}  where $d_n$ is the constant given by $$d_n=\frac{1}{2} \Big(\frac{n+1}{\mathsf{vol}_{n-1}(\mathbf{B}_2^{n-1})}\Big)^{\frac{2}{n+1}}.$$
\vskip 3mm
Let $K\subset \mathbb{R}^{n}$ be a convex body with $o\in\partial K$ and $N_{K}(o)=-e_{n}$. The boundary of $K$ (around the origin)  can be represented  by $x_n=\varphi(x_1, \cdots, x_{n-1})$ for some convex function $\varphi: \mathbb{R}^{n-1}\to [0, \infty)$ such that $\varphi(o)=0$.   The indicatrix of Dupin (see e.g., \cite{Schutt:2003})  is the quadratic form $$\Big\{ y\in \mathbb{R}^{n-1}: \ \big\langle y, \big(\nabla^2\varphi(o)\big) y\big\rangle=1\Big\},$$
where $\nabla^2\psi(o)$ is the generalized Hessian matrix of $\psi$ at $o$ given by \eqref{dupin-1}.   
The following lemma  was proved in  \cite[Lemma 11 (ii)]{Werner:1990}.   
\vskip 2mm
\begin{lem} \cite{Werner:1990} \label{lemma2}
	Let $K\subset \mathbb{R}^{n}$ be a convex body with $o\in\partial K$ and $N_{K}(o)=-e_{n}$. Suppose  that the indicatrix of Dupin at the origin $o$ exists and is an $(n-1)$ dimensional sphere with radius $\sqrt{\rho}$. Let $\xi$ be an interior point of $K$.  
Then there is  $\varepsilon_0>0$ such  that,  for all $z\in [o, \xi)$ with $\|z\|<\varepsilon_0$,  
	\[ \mathsf{dist}\big(z, \mathbf{B}_2^{n}(\rho e_n,\rho)^c\big)\leq z_n \leq \mathsf{dist}\big(z, \mathbf{B}_2^{n}(\rho e_n,\rho)^c\big) \bigg(1+\frac{2 \mathsf{dist}\big(z, \mathbf{B}_2^{n}(\rho e_n,\rho)^c\big) }{\langle \frac{\xi}
		{\|\xi\|},N_{K}(o)\rangle^2\rho }\bigg).
	\]  \end{lem}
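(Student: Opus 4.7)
The plan is to observe that this inequality is a purely geometric statement about the osculating ball $B := \mathbf{B}_2^n(\rho e_n,\rho)$ --- the role of the indicatrix-of-Dupin hypothesis is merely to pin down its radius --- and to reduce both estimates to an algebraic identity along the ray $[o,\xi)$. Set $\theta := \xi/\|\xi\|$ and $c := \theta_n$. Since $N_K(o) = -e_n$ is the outer normal to $K$ at $o$, the support inequality gives $K \subset \{x_n \ge 0\}$, and $\xi \in \mathsf{int}(K)$ forces $\xi_n > 0$; hence $c \in (0,1]$ and $\langle \theta, N_K(o)\rangle^2 = c^2$. Parametrizing the ray by $z(t) = t\theta$ gives $z_n = tc$ and $\|z\| = t$; the computation $\|z - \rho e_n\|^2 = t^2 - 2\rho t c + \rho^2$ shows that $z(t)$ lies in $\mathsf{int}(B)$ precisely for $0 < t < 2\rho c$, and after shrinking $\varepsilon_0$ below $2\rho c$ the formula $d := \mathsf{dist}(z,B^c) = \rho - \|z - \rho e_n\|$ is valid. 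Squaring and rearranging $\rho - d = \|z - \rho e_n\|$ yields the master identity
$$z_n \;=\; d + \frac{\|z\|^2 - d^2}{2\rho}. \qquad (\ast)$$

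From here the lower bound $d \le z_n$ is immediate: the origin lies on $\partial B$, so points of $B^c$ approach $o$ and hence $d \le \|z - o\| = \|z\|$; then $\|z\|^2 \ge d^2$ and $(\ast)$ gives $z_n \ge d$. For the upper bound I would feed the ray relation $\|z\| = z_n/c$ into $(\ast)$ and set $\alpha := z_n - d \ge 0$; a short rearrangement produces the quadratic identity
$$\alpha\bigl(2\rho c^2 - 2d - \alpha\bigr) \;=\; (1-c^2)\,d^2.$$
Shrinking $\varepsilon_0$ once more so that $\varepsilon_0 \le \rho c^2/3$, the trivial bounds $d \le \|z\|$ and $\alpha \le z_n \le \|z\|$ give $2d + \alpha < 3\varepsilon_0 \le \rho c^2$, so $2\rho c^2 - 2d - \alpha > \rho c^2 > 0$. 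Dividing the quadratic identity then yields
$$\alpha \;\le\; \frac{(1-c^2)\,d^2}{\rho c^2} \;\le\; \frac{d^2}{\rho c^2} \;\le\; \frac{2d^2}{c^2\rho},$$
which rewrites as $z_n \le d\bigl(1 + 2d/(c^2\rho)\bigr)$, and inserting $c^2 = \langle \xi/\|\xi\|, N_K(o)\rangle^2$ is exactly the stated upper estimate.

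The proof is not conceptually deep; the only genuine bookkeeping issue is choosing a single $\varepsilon_0$ that simultaneously keeps $z$ strictly inside $B$ (so that $d$ is a positive distance realized by the center-ray construction) and keeps the denominator $2\rho c^2 - 2d - \alpha$ bounded away from zero so that the quadratic identity may be divided safely. Any $\varepsilon_0 < \min\{2\rho c,\; \rho c^2/3\}$, which depends only on $\rho$ and on $\xi$ through $c$, does the job, and both halves of the lemma then drop out of $(\ast)$.
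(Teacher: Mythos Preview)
Your argument is correct. The paper does not supply its own proof of this lemma; it simply quotes it as \cite[Lemma~11(ii)]{Werner:1990} and uses it as a black box, so there is nothing to compare against at the level of strategy. Your self-contained derivation via the master identity $z_n = d + (\|z\|^2 - d^2)/(2\rho)$ and the quadratic in $\alpha = z_n - d$ is clean and does exactly what is needed: you correctly observe that the convex body $K$ enters only through $N_K(o)=-e_n$ (forcing $c=\theta_n>0$) and that the inequality is a statement purely about the osculating ball. The choice $\varepsilon_0 \le \rho c^2/3$ is sufficient since $\rho c^2/3 < 2\rho c$ automatically, so a single threshold works for both requirements.
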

\vskip 3mm
The identity  \eqref{affine-1} shows that $as(K)$ can be expressed as a derivative of volume, using floating bodies. That is just one example of this
phenomenon and in fact floating bodies can be replaced by other families of bodies constructed from $K$. We refer to e.g., \cite{MeyerWerner:1998, SchuettWerner:2004, Werner:1994, Werner:1999,  WernerYe:2008, WernerYe:2010}
and only mention in more detail the {\em Ulam floating body (or metronoid)}  $\mathsf{M}_{\delta}(K)$. These bodies were introduced in \cite{Huang:2019a} (see also   \cite{Huang:2019}), by \[
\mathsf{M}_{\delta}(K):=\mathsf{M}(\nu_{K, \delta})=
\bigcup_{g \in \mathcal{F}_K}
\Big\{\int_{\mathbb{R}^n}yg(y)d\nu_{K,  \delta}(y)\Big\},
\] where  $d\nu_{K,  \delta}=\delta^{-1}\mathbf{1}_{K}dx$ with $\mathbf{1}_{K}$ being the 
characteristic function of $K$ and $\,dx$ the Lebesgue measure on $\mathbb{R}^n$, and 
$$\mathcal{F}_K=\Big\{g: \mathbb{R}^n\to[0,1]: \int_{\mathbb{R}^n}g\,d\nu_{K,  \delta}=1\  \mathrm{and}\ \int_{\mathbb{R}^n} y g(y)d\nu_{K,  \delta}(y) \ \mathrm{ exists}\Big\}.$$ 
\par
It has been noted in \cite{Huang:2019a} that  $\mathsf{M}_{\delta}(K)$ is  convex and  in  \cite[Theorem 1.1]{Huang:2019}  that for  $0<\delta<\mathsf{vol}_n(K)$ small enough, $\mathsf{M}_{\delta}(K)$ is a isomorphic to $K_{\delta}$ in the sense that
\[
K_{(1-\frac{1}{e})\delta}\subseteq\mathsf{M}_{\delta}(K)\subseteq K_{\frac{1}{e}\delta}.
\] 
As shown in \cite[Proposition 2.1]{Huang:2019a}, the support function of $\mathsf{M}_{\delta}(K)$ can be calculated by 
$h_{\mathsf{M}_{\delta}(K)}(\theta)=\langle z_\theta,\theta\rangle$ for $\theta\in \mathbb{S}^{n-1}$, where  \[
z_\theta=\frac{1}{\delta}\int_{\{x\in K: \langle x,\theta\rangle \geq R_{\delta}(\theta)\}}x\,dx=\frac{1}{\delta}\int_{K\cap \mathsf{H}^+(\theta, R_{\delta}(\theta))}x\,dx
\] is the  barycentre  of $K\cap  \mathsf{H}^+(\theta, R_{\delta}(\theta))$ and $R_{\delta}(\theta)\in \mathbb{R}$ satisfies that $
\mathsf{vol}_n(K\cap \mathsf{H}^+(\theta, R_{\delta}(\theta)))=\delta. $ 
\vskip 2mm
Moreover, it was shown in \cite{Huang:2019} that  a variational argument involving the Ulam floating bodies leads to the affine surface area, namely
 \begin{equation}  \lim_{\delta\rightarrow 0^+} \frac{\mathsf{vol}_n(K)-\mathsf{vol}_n(\mathsf{M}_\delta(K))}{\delta^{\frac{2}{n+1}}}=c_n as(K), \label{affine-1-M} \end{equation}  where $c_n$ is the constant given by $$ 
	c_{n}=\frac{n+1}{2(n+3)}\Big(\frac{n+1}{\mathsf{vol}_{n-1}(\mathbf{B}_2^{n-1})}\Big)^{\frac{2}{n+1}}.$$  
\vskip 3mm
We will make use of the following lemma which has been proved in \cite[Lemma 3.5]{Huang:2019}. 

 \begin{lem}\cite{Huang:2019} \label{lem: UnaffectedCut} 
	Let $K\subseteq\mathbb{R}^{n}$ be a convex body. Assume that $o\in \partial K$ and $N_K(o)=-e_n$ is the unique outer normal vector to $\partial K$ at $o$.  Then, for each $t>0$,  there exists $r>0$ such that for any $\delta>0$,
	\[
	\mathsf{M}_{\delta}(K) \cap\mathbf{B}_2^{n}(o,r)=\mathsf{M}_{\delta}\big(K\cap \mathsf{H}^+(-e_n, -t)\big) \cap\mathbf{B}_2^{n}(o,r)=\mathsf{M}_{\delta}\big(K\cap \mathsf{H}^{-}(e_n, t)\big) \cap\mathbf{B}_2^{n}(o,r).
	\]
\end{lem}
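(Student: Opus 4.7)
Since $\mathsf{H}^+(-e_n,-t)=\mathsf{H}^-(e_n,t)=\{y:y_n\le t\}$, the second equality is immediate; setting $K_t:=K\cap\{y_n\le t\}$, it remains to find $r>0$ depending only on $K$ and $t$ so that $\mathsf{M}_\delta(K)\cap\mathbf{B}_2^n(o,r)=\mathsf{M}_\delta(K_t)\cap\mathbf{B}_2^n(o,r)$ for every $\delta>0$. The inclusion $\mathsf{M}_\delta(K_t)\subseteq\mathsf{M}_\delta(K)$ is automatic, since any $g\in\mathcal{F}_{K_t}$ extends by zero to an element of $\mathcal{F}_K$ producing the same barycentre. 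My plan for the reverse inclusion rests on a Markov-type a priori bound: if $p=\delta^{-1}\int_K y\,\rho(y)\,dy$ with $\rho:K\to[0,1]$ of mass $\delta$ lies in $\mathbf{B}_2^n(o,r)$, then $K\subseteq\{y_n\ge 0\}$ together with $p_n\le r$ and $\rho\le 1$ yield
\[
\delta\;\le\;2\,\mathsf{vol}_n\bigl(K\cap\{y_n\le 2r\}\bigr),
\]
which tends to $0$ with $r$. For $r$ small this forces $\delta\le\mathsf{vol}_n(K_t)$, and the case $\delta>\mathsf{vol}_n(K_t)$ is trivial since then both sides of the claimed equality are empty.

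Assuming $\delta\le\mathsf{vol}_n(K_t)$, I verify $\langle p,\theta\rangle\le h_{\mathsf{M}_\delta(K_t)}(\theta)$ for every $\theta\in\mathbb{S}^{n-1}$ via the support-function identity $h_{\mathsf{M}_\delta(L)}(\theta)=\langle z_\theta^L,\theta\rangle$ recalled in Section~\ref{asa}, partitioning by proximity of $\theta$ to $-e_n$. Let $D:=\max_{y\in K}\|y\|$ and fix $\epsilon_0:=t/(4D)$. For $\|\theta+e_n\|\le\epsilon_0$, the decomposition $\langle y,\theta\rangle=-y_n+\langle y,\theta+e_n\rangle$ shows that any $y$ in the $\delta$-cap $K\cap\mathsf{H}^+(\theta,R_\delta^K(\theta))$ satisfies $y_n\le D\epsilon_0-R_\delta^K(\theta)\le t$ provided $R_\delta^K(\theta)\ge -t/2$; this is ensured by the Markov bound once $r$ is small enough that $2\mathsf{vol}_n(K\cap\{y_n\le 2r\})\le\mathsf{vol}_n(K\cap\{y_n\le t/4\})$, so the cap lies in $K_t$, coincides with the $\delta$-cap of $K_t$ in direction $\theta$, and $h_{\mathsf{M}_\delta(K)}(\theta)=h_{\mathsf{M}_\delta(K_t)}(\theta)\ge\langle p,\theta\rangle$. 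For $\|\theta+e_n\|\ge\epsilon_0$, the uniqueness $N_K(o)=-e_n$ forces $h_{K_t}(\theta)>0$ on this compact set, so it is bounded below by some constant $c=c(\epsilon_0,K_t)>0$; combined with $h_{\mathsf{M}_\delta(K_t)}(\theta)\ge R_\delta^{K_t}(\theta)$ and the uniform convergence $R_\delta^{K_t}(\theta)\to h_{K_t}(\theta)$ as $\delta\to 0^+$, a further shrinking of $r$ (so that the forced $\delta$ is small enough) yields $h_{\mathsf{M}_\delta(K_t)}(\theta)\ge c/2\ge r\ge\langle p,\theta\rangle$.

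The main obstacle is the required uniformity in $\delta$; a single $r$ must handle all $\delta>0$ at once. The Markov-type bound is what converts the lemma into a small-$\delta$ statement, in which both the \emph{cap inside} $K_t$ argument for directions $\theta$ near $-e_n$ and the uniform positive lower bound on $h_{\mathsf{M}_\delta(K_t)}(\theta)$ for $\theta$ away from $-e_n$ can be arranged simultaneously by a single choice of $r=r(K,t)>0$.
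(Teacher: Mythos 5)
The paper does not actually supply a proof of this lemma; it is imported verbatim from Huang--Slomka--Werner with only a citation, so there is no in-paper argument to compare against. Judged on its own, your proof appears correct, and the Markov step is the right mechanism for getting the required uniformity in $\delta$: since $K\subseteq\{y_n\ge 0\}$ and any $p\in\mathsf{M}_\delta(K)\cap\mathbf{B}_2^n(o,r)$ is the barycentre of a density $\rho\le 1$ of mass $\delta$ with $p_n\le r$, at least half the mass of $\rho$ sits in $K\cap\{y_n\le 2r\}$, whence $\delta\le 2\,\mathsf{vol}_n(K\cap\{y_n\le 2r\})$; this quantity tends to $0$ with $r$ (the face $K\cap\{y_n=0\}$ is contained in a hyperplane), so a single small choice of $r$ forces $\delta$ to be small whenever the intersection is nonempty, and when the intersection is empty the claim holds vacuously. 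Once $\delta$ is small, both halves of your support-function argument go through: the bound $y_n\le D\epsilon_0 - R_\delta^K(\theta)$ for $\|\theta+e_n\|\le\epsilon_0=t/(4D)$ (together with $R_\delta^K(\theta)\ge -t/2$, which you correctly extract from the same Markov bound via $K\cap\{y_n\le t/4\}\subseteq K\cap\{\langle y,\theta\rangle\ge -t/2\}$) puts the $\delta$-cap of $K$ inside $K_t$, forcing equality of caps and hence of support values; and for $\|\theta+e_n\|\ge\epsilon_0$, uniqueness of the normal gives $h_{K_t}\ge c>0$ on this compact set, so $h_{\mathsf{M}_\delta(K_t)}(\theta)\ge R_\delta^{K_t}(\theta)\ge c/2\ge r\ge\langle p,\theta\rangle$ for $r$ small enough (uniform convergence of $R_\delta^{K_t}\to h_{K_t}$ on a compact direction set is a bit more than needed; a compactness argument giving a single threshold $\delta^*$ suffices). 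Two minor remarks: you should make explicit that $K_t$ agrees with $K$ in a neighbourhood of $o$ because $o$ is interior to $\{y_n\le t\}$, so that $-e_n$ is also the unique outer normal of $K_t$ at $o$ and hence $h_{K_t}(\theta)>0$ for $\theta\neq -e_n$; and the inclusion $\mathsf{M}_\delta(K_t)\subseteq\mathsf{M}_\delta(K)$ should read as taking $\tilde g=g\,\mathbf{1}_{K_t}$ (restriction then extension by zero), which preserves both the normalization and the barycentre.
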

\vskip 4mm
For $\psi\in \mathfrak{C}$, 
let $\mathsf{epi}(\psi)$ stand for  the epigraph of $\psi$, namely, \[
\mathsf{epi}(\psi)=\big\{(x,y)\in\mathbb{R}^n\times\mathbb{R}: y\ge\psi(x)\big\}.
\] Clearly, $\mathsf{epi}(\psi)$ is a closed convex set in $\mathbb{R}^{n+1}$. For each $x\in \mathbb{R}^n$, let $z_x=(x,\psi(x))$ be the point on the boundary of $\mathsf{epi}(\psi)$. 
It is well-known that the Gauss curvature $\kappa_{\psi}(z_x)$ and the outer unit normal $N_{\psi}(z_x)$ of $\partial\mathsf{epi}(\psi)$ at  $z_x$ are given by (see, e.g., \cite{Caglar:2016}), 
\begin{align}\label{qu1}
	\kappa_{\psi}(z_x)=\frac{\det(\nabla^2\psi(x))}{(1+\|\nabla\psi(x)\|^2)^\frac{n+2}{2}}\ \ \mathrm{and}\ \ \ N_{\psi}(z_x)=\frac{(\nabla \psi(x), -1)}{(1+\|\nabla\psi(x)\|^2)^\frac{1}{2}}.\end{align} Moreover, the following formula holds: \begin{align}\label{var-11}
(1+\|\nabla\psi(x)\|^2)^\frac{1}{2}dx=d\mu_{\partial\mathsf{epi}(\psi)}(z_x). \end{align}
\vskip 3mm	
	Let  $f=e^{-\psi}$ be   a log-concave function.   It is also well known (see e.g., \cite{Rockafellar:1998})  that  $\int e^{-\psi}\,dx <\infty$ holds if and only if there exist  constants $a>0$ and $b\in \mathbb{R}$ such that $\psi(x) \geq a \|x\|+b$ for all $x\in \mathbb{R}^n$. As remarked above, the quantity 	 
\begin{equation}\label{00}
	as(f)=\int_{\mathbb{R}^n}(\det(\nabla^2\psi(x)))^{\frac{1}{n+2}}e^{-\psi(x)}dx,
\end{equation} 
is called the affine surface area of the log-concave function $f=e^{-\psi}$. This is justified  as it shares many properties with the affine surface area for convex bodies. 
For one, this quantity is an affine invariant. We have for all affine transformations
$T: \mathbb{R}^n\to\mathbb{R}^n$ such that  $\det T$, the determinant of $T$,  is nonzero,  
\[
as(f\circ T)=|\det T|^{-\frac{n}{n+2}}as(f).
\] 
Then, this quantity is a valuation, namely, 
\[
as(f_1)+as(f_2)=as(\max(f_1,f_2))+as(\min(f_1,f_2)),
\] 
holds  for log-concave functions $f_1=e^{-\psi_1}$ and  $f_2=e^{-\psi_2}$ such that $\min(\psi_1,\psi_2)$ is convex.  
\par
\noindent
Moreover, it follows from  \eqref{qu1} and \eqref{var-11} that  
\begin{equation}\label{01}
	as(f)=\int_{\partial\mathsf{epi}(\psi)}(\kappa_{\mathsf{epi}(\psi)}(z_x))^\frac{1}{n+2}~e^{-\langle z_x,e_{n+1}\rangle}d\mu_{\partial\mathsf{epi}(\psi)}(z_x),
\end{equation} 
and the expression for $as(f)$ in \eqref{01} resembles the one of the affine surface area $as(K)$ of  a convex body $K\subseteq\mathbb{R}^{n+1}$ when the bounded convex body $K$ is replaced by the unbounded convex set $\mathsf{epi}(\psi)$. Readers are referred to \cite{Werner:2019} for more details of the affine surface area $as(f)$ of log-concave functions $f$. We also refer to \cite{Caglar:2016} for a slightly different definition of the affine surface area of a log-concave function and the related properties.

Motivated by the convex floating body, Li,   Sch\"{u}tt and Werner in \cite{Werner:2019} introduced the floating function for a log-concave function. 
For a log concave function $f=e^{-\psi}$  and  $\delta>0$ they first defined the floating set of $\mathsf{epi}(\psi)$ by \begin{align} \label{floating-epi} (\mathsf{epi}(\psi))_{\delta} =\bigcap_{\{\mathsf{H}:\ \mathsf{vol}_{n+1}(\mathsf{H}^{-}\cap \mathsf{epi}(\psi))\leq \delta\}}\mathsf{H}^{+}.\end{align} Then they defined the floating function $\psi_{\delta}$ of $\psi$  via  its epigraph as follows
\begin{align} \label{def-floating-f-1} \mathsf{epi}(\psi_{\delta})=(\mathsf{epi}(\psi))_{\delta}.\end{align}  
And finally,  the floating function $f_{\delta}$ of  $f=e^{-\psi}$ is given by $f_{\delta}=e^{-\psi_{\delta}}$. 
\par
It was proved in \cite[Theorem 1]{Werner:2019} that \begin{align} as(f)&=\int_{\mathbb{R}^n}(\det(\nabla^2\psi(x)))^{\frac{1}{n+2}}e^{-\psi(x)}dx \nonumber \\ &=
\lim_{\delta\to0^+}  \bigg(\big(d_{n+1} \cdot \delta^{\frac{2}{n+2}}\big)^{-1} \int_{\mathbb{R}^n}(e^{-\psi(x)}-e^{- \psi_{\delta}(x)})\,dx\bigg), \label{affine-int-f}
\end{align}  where the constant $d_{n+1}$ is given by  
\begin{equation}\label{003-d}
	d_{n+1}=\frac{1}{2}\Big(\frac{n+2}{\mathsf{vol}_n(\mathbf{B}_2^n)}\Big)^{\frac{2}{n+2}}.
\end{equation}
 \vskip 3mm
 We shall need the following  lemma, whose proof  can be found in e.g., \cite{Werner:2019,Schutt:2003}.

\begin{lem}\label{lemma1}
	Let $r>0$ be a constant and $\mathcal{E} \subseteq \mathbb{R}^{n+1}$ be an ellipsoid given by 
	\[
	\mathcal{E}=\Big\{z \in\mathbb{R}^{n+1}:  \Big(\frac{z_{n+1}-r}{r}\Big)^2+\sum_{i=1}^{n}\Big(\frac{z_i}{r}\Big)^2 \leq1\Big\}.
	\]
Then, for all $h\in (0, 2r)$, one has \begin{align}\label{ineq-12-23-1}
	 \Big(1-\frac{h}{2r}\Big)^{\frac{n}{n+2}}h \leq d_{n+1}\cdot \left(\frac{\mathsf{vol}_{n+1}(\mathcal{E}\cap\mathsf{H}^{-}(e_{n+1},h))}{r^{\frac{n}{2}}}\right)^{\frac{2}{n+2}} \leq h.
	\end{align} 	
\end{lem}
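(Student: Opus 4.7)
The plan is to compute $\mathsf{vol}_{n+1}(\mathcal{E}\cap\mathsf{H}^{-}(e_{n+1},h))$ as an elementary one-variable integral by slicing, sandwich the integrand by two sharp monotone bounds, and verify that the constant $d_{n+1}$ from \eqref{003-d} is defined precisely so that the remaining arithmetic collapses.

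First I would observe that, as written, $\mathcal{E}$ is simply the Euclidean ball $\mathbf{B}_2^{n+1}(re_{n+1},r)$, so that $\mathcal{E}\cap\mathsf{H}^{-}(e_{n+1},h)$ is a spherical cap of height $h\in(0,2r)$. Slicing perpendicularly to $e_{n+1}$ at height $t\in[0,h]$ yields an $n$-dimensional ball of radius $\sqrt{r^2-(r-t)^2}=\sqrt{t(2r-t)}$, and Cavalieri's principle gives
\[
\mathsf{vol}_{n+1}(\mathcal{E}\cap\mathsf{H}^{-}(e_{n+1},h))=\mathsf{vol}_n(\mathbf{B}_2^n)\int_0^h t^{n/2}(2r-t)^{n/2}\,dt.
\]
Since $\int_0^h t^{n/2}\,dt=\tfrac{2}{n+2}h^{(n+2)/2}$ is explicit, the only source of non-linearity in $h$ is the factor $(2r-t)^{n/2}$.

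Next I would bound that factor above and below on $[0,h]$. As $(2r-t)^{n/2}$ is decreasing in $t$, on $[0,h]$ it lies between $(2r-h)^{n/2}=(2r)^{n/2}(1-h/(2r))^{n/2}$ and $(2r)^{n/2}$. Pulling these constants out of the integral, dividing by $r^{n/2}$, raising to the power $2/(n+2)$, and multiplying by $d_{n+1}$ converts both sides into expressions of the form $h$ times a power of $(1-h/(2r))$. A direct check using \eqref{003-d} gives
\[
d_{n+1}\Big(\tfrac{2^{(n+2)/2}\,\mathsf{vol}_n(\mathbf{B}_2^n)}{n+2}\Big)^{2/(n+2)}=\tfrac{1}{2}\cdot 2=1,
\]
which is exactly the arithmetic identity for which $d_{n+1}$ was engineered. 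Hence the upper estimate reduces cleanly to $h$, while the lower estimate acquires the factor $(1-h/(2r))^{n/(n+2)}$ (the exponent $n/2$ being reduced by the $(n+2)/2$-th root), yielding both inequalities in \eqref{ineq-12-23-1}.

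There is no real conceptual obstacle: the statement is a sharp explicit estimate for a spherical cap volume, and both inequalities become equalities in the limit $h\to 0^+$. The only bookkeeping to watch is that the normalization by $r^{n/2}$ in \eqref{ineq-12-23-1} is exactly what is needed to cancel the factor $(2r)^{n/2}$ produced by the monotone bound on $(2r-t)^{n/2}$, and that the exponent $n/2$ becomes $n/(n+2)$ after the $(n+2)/2$-th root is taken.
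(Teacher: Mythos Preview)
Your proof is correct. The paper does not actually prove this lemma itself; it merely cites \cite{Werner:2019,Schutt:2003} for the proof. Your slicing argument---writing the cap volume as $\mathsf{vol}_n(\mathbf{B}_2^n)\int_0^h t^{n/2}(2r-t)^{n/2}\,dt$, bounding the decreasing factor $(2r-t)^{n/2}$ by its endpoint values, and then checking that $d_{n+1}$ is exactly the constant that makes the arithmetic collapse---is the standard route and is essentially what appears in those references as well.
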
 
\vskip 7mm

\section{Ulam floating functions}\label{ul-3}
Motivated by the Ulam floating bodies, we will now introduce the Ulam floating functions. We  define first the Ulam floating set
$\mathsf{M}_{\delta}(C)$ for a convex, closed, not necessarily bounded set $C\subseteq\mathbb{R}^{n+1}$.   For convenience, we denote  $d\nu_{C,  \delta} =\delta^{-1}\mathbf{1}_{C}dx$.   
Let  $$\mathcal{F}_C=\Big\{g: \mathbb{R}^{n+1}\to[0,1]: \int_{\mathbb{R}^{n+1}}g\,d\nu_{C,  \delta}=1\  \mathrm{and}\ \int_{\mathbb{R}^{n+1}} yg(y)d\nu_{C,  \delta}(y) \ \mathrm{ exists}\Big\}.$$
\vskip 3mm
\begin{deff}\label{def-M-CC} (Ulam floating set)
	Let $C$ be a closed convex set in $\mathbb{R}^{n+1}$ with nonempty interior. For $\delta>0$,  the
	Ulam floating set of $C$ is defined by 
	\[
	\mathsf{M}_{\delta}(C)=\mathsf{M}(\nu_{C,  \delta})=
	\bigcup_{g\in \mathcal{F}_C}
	\Big\{\int_{\mathbb{R}^{n+1}}y g(y)d\nu_{C,  \delta}(y)\Big\}. 
	\] 
\end{deff} 
Clearly, the Ulam floating set $\mathsf{M}_{\delta}(C)$ is  closed  and convex.    As we are mainly interested in the Ulam floating functions,  from now on, our discussion will be concentrated on the case when $C=\mathsf{epi}(\psi)$ with $\psi\in \mathfrak{C}$. As $\mathsf{epi}(\psi)$ is a closed convex set in $\mathbb{R}^{n+1}$ with nonempty interior, we define  \begin{equation} \label{f-17}
	\mathsf{M}_{\delta}(\mathsf{epi}(\psi))=
	\mathsf{M}(\nu_{\mathsf{epi}(\psi), \delta})=
	\bigcup_{g \in \mathcal{F}_{\mathsf{epi}(\psi)}}  
	\Big\{\int_{\mathbb{R}^{n+1}}y g(y)d\nu_{\mathsf{epi}(\psi), \delta}(y)\Big\},
\end{equation} where the set $\mathcal{F}_{\mathsf{epi}(\psi)}$ is given by 
$$\mathcal{F}_{\mathsf{epi}(\psi)}=\Big\{g: \mathbb{R}^{n+1}\to[0,1]: \int_{\mathbb{R}^{n+1}}g\,d\nu_{\mathsf{epi}(\psi), \delta}=1\  \mathrm{and}\ \int_{\mathbb{R}^{n+1}} y g(y)\,d\nu_{\mathsf{epi}(\psi), \delta}(y) \ \mathrm{ exists}\Big\}.$$ Since  $\mathsf{M}_{\delta}(\mathsf{epi}(\psi))$ is  closed  and convex, there is  a unique convex function
$\mathsf{M}_{\delta}(\psi):\mathbb{R}^n\to\mathbb{R}$ such that
\[
\mathsf{M}_{\delta}(\mathsf{epi}(\psi))=\mathsf{epi}(\mathsf{M}_{\delta}(\psi)).
\]
This leads to the following definitions of the Ulam floating functions for convex and log-concave functions.
\vskip 3mm
\begin{deff}
	Let $\psi\in \mathfrak{C}$ and  $\mathsf{epi}(\psi)$ be its epigraph. Let
	$\delta\in (0, \infty)$   and  $f=e^{-\psi}$ be a log-concave function. 
	\vskip 1mm 
	\noindent \textbf{(i)} The Ulam floating function   $\mathsf{M}_{\delta}(\psi)$ of $\psi$ is defined  by 
	\[
	\mathsf{M}_{\delta}(\mathsf{epi}(\psi))=\mathsf{epi}(\mathsf{M}_{\delta}(\psi)).
	\]
	
	\noindent \textbf{(ii)}  The Ulam floating function $U_{\delta}(f)$ of $f$ is 
	defined as 
	\[
	U_{\delta}(f)=e^{-\mathsf{M}_{\delta}(\psi)}.
	\]
\end{deff}  
\vskip 2mm
As $\mathsf{M}_{\delta}(\mathsf{epi}(\psi))\subseteq  \mathsf{epi}(\psi)$ for all $\delta>0$, one sees that $\mathsf{epi}(\mathsf{M}_{\delta}(\psi)) \subseteq  \mathsf{epi}(\psi)$ which further yields  $\mathsf{M}_{\delta}(\psi)\geq \psi$. More generally, if $0<\delta_1<\delta_2$, then $\mathsf{M}_{\delta_2}(\mathsf{epi}(\psi))\subseteq \mathsf{M}_{\delta_1}(\mathsf{epi}(\psi))$.  
 Consequently,  $\mathsf{M}_{\delta_2}(\psi)\geq \mathsf{M}_{\delta_1}(\psi)$ and $U_{\delta_2} (f) \leq U_{\delta_1}(f)\leq f$ for $0<\delta_1<\delta_2$. It can also be checked easily  that for all $x\in \mathbb{R}^n$, $$\lim_{\delta\rightarrow 0^+} \mathsf{M}_{\delta}(\psi)=\psi \ \ \mathrm{and} \ \ \lim_{\delta\rightarrow 0^+} U_{\delta}(f)=f.$$
\vskip 3mm
Following the discussion in \cite{Huang:2019} for properties of the Ulam floating bodies,  we now describe some basic properties for the Ulam floating set $\mathsf{M}_{\delta}(\mathsf{epi}(\psi))$.  
\newline
Let the function $\eta:  \mathbb{S}^{n}\times\mathbb{R}\to[0,\infty]$ be defined by  
\begin{align}\label{def-eta} 
	 \eta(\theta,a)=\mathsf{vol}_{n+1}(\mathsf{epi}(\psi)\cap\mathsf{H}^{-}(\theta,a)), \ \ \mathrm{for} \ \theta\in   \mathbb{S}^{n} \ \mathrm{and} \ a\in \mathbb{R}. 
\end{align} Let  $\eta_{\theta}(\cdot)=\eta(\theta, \cdot)$ for any fixed $\theta\in \mathbb{S}^{n}$ and let $$\Sigma_{\psi}=\big\{\theta\in   \mathbb{S}^{n}: \eta_{\theta}(a)<\infty \ \ \mathrm{for \ all}\ \  a\in \mathbb{R}\big\}.$$
Note that if $\theta\in \Sigma_{\psi}$, then $\eta_{\theta}: [-h_{\mathsf{epi}(\psi)}(-\theta), \infty)\rightarrow [0, \infty)$ is continuous and strictly increasing. 
Indeed,  it is obvious that $$\mathsf{epi}(\psi)\cap\mathsf{H}^{-}(\theta,a)\subset\mathsf{epi}(\psi)\cap\mathsf{H}^{-}(\theta,b)$$ for $-h_{\mathsf{epi}(\psi)}(-\theta)<a<b<\infty$,  and then 
	$\eta_{\theta}(a)<\eta_{\theta}(b)$. Let $a_0\in [-h_{\mathsf{epi}(\psi)}(-\theta),\infty)$ and  $\zeta\in (0, 1)$. Then, whenever $|a-a_0|\leq\zeta$ and $a\in [-h_{\mathsf{epi}(\psi)}(-\theta),\infty)$, one has 
	\[ 
	\mathsf{epi}(\psi)\cap\mathsf{H}^{-}(\theta,a_0-\zeta)\subseteq
\mathsf{epi}(\psi)\cap\mathsf{H}^{-}(\theta,a)\subseteq
	\mathsf{epi}(\psi)\cap\mathsf{H}^{-}(\theta,a_0+\zeta).
	\]  This further gives 
	\begin{align*}
	|\eta_{\theta}(a)-\eta_{\theta}(a_0)| \leq\int_{a_0-\zeta}^{a_0+\zeta} \mathsf{vol}_n(\{x\in\mathsf{epi}(\psi):\left<x,\theta\right>=t\})
\,dt \leq  2D\zeta,  
	\end{align*} where $D<\infty$ is given by  
	$$D= \max \Big\{\mathsf{vol}_n(\mathsf{epi}(\psi)\cap \mathsf{H}(\theta, t)): t\in[a_0-1, a_0+1]\cap[-h_{\mathsf{epi}(\psi)}(-\theta),\infty)\Big\}.$$ 	
	Hence, for all $\zeta\in (0, \frac{\varepsilon}{2D})$, one gets 
$$
|\eta_{\theta}(a)-\eta_{\theta}(a_0)|<\varepsilon, \ \ \mathrm{for \ all} \ \ a\in (a_0-\zeta, a_0+\zeta)\cap [-h_{\mathsf{epi}(\psi)}(-\theta),\infty).
$$
This shows that  $\eta_{\theta}(\cdot)$ is continuous on $[-h_{\mathsf{epi}(\psi)}(-\theta),\infty)$.   
 Consequently,  the inverse function of $\eta_{\theta}$  exists and will be denoted by $\eta^{-1}_{\theta}(\cdot):  [0, \infty)\rightarrow [-h_{\mathsf{epi}(\psi)}(-\theta), \infty)$. Clearly,  $\eta^{-1}_{\theta}(\cdot)$ is also  
continuous  and strictly increasing. Let $\delta>0$ be fixed. Then, 
\begin{align*} 
\eta^{-1}_{\theta}(\delta)&=\min\Big\{a \in\mathbb{R}: \ \nu_{\mathsf{epi}(\psi), \delta} \big(\big\{x
\in\mathbb{R}^{n+1}: \langle x,\theta\rangle \leq  a \big\}\big)\ge1\Big\}\\ &=\min\big\{a \in\mathbb{R}: \ \mathsf{vol}_{n+1}(\mathsf{epi}(\psi)\cap\mathsf{H}^{-}(\theta, a)) \geq \delta \big\}\\ &= \min\big\{a \in\mathbb{R}: \ \eta_{\theta}(a) \geq \delta \big\}.
\end{align*} 
Moreover for $\delta>0$,  
$$\eta_{\theta}(\eta^{-1}_{\theta}(\delta))= \mathsf{vol}_{n+1}(\mathsf{epi}(\psi)\cap\mathsf{H}^{-}(\theta, \eta^{-1}_{\theta}(\delta)))=\delta.
$$ 
Define
$g_{\theta}:\mathbb{R}\to[0,1]$ as follows:  \[
g_{\theta}(t) =
\begin{cases} 
	0,  & t\geq \eta^{-1}_{\theta}(\delta) \\
	1,  & t<\eta^{-1}_{\theta}(\delta). 	
\end{cases}
\]
Clearly,  $0\leq g_{\theta}\leq 1$ and \begin{align} \int_{\mathbb{R}^{n+1}}g_{\theta}
(\langle x,\theta\rangle)d\nu_{\mathsf{epi}(\psi),\delta}(x)&= \frac{1}{\delta} \int _{\{x\in \mathsf{epi}(\psi): \langle x, \theta\rangle< \eta^{-1}_{\theta}(\delta)\}}\,dx \nonumber \\ & = \frac{\mathsf{vol}_{n+1}(\mathsf{epi}(\psi)\cap\mathsf{H}^{-}(\theta, \eta^{-1}_{\theta}(\delta)))}{\delta} =1.  \label{condition-g-theta-22-01}  \end{align}  
Note that  $g_{\theta}(\langle \theta, \cdot\rangle)\in \mathcal{F}_{\mathsf{epi}(\psi)}$  because $y_\theta$ exists (and indeed $y_\theta \in\mathsf{M}_{\delta}(\mathsf{epi}(\psi))$), where
\begin{eqnarray} 
y_\theta &=&\int_{\mathbb{R}^{n+1}}x g_{\theta}(\langle x,\theta\rangle)d\nu_{\mathsf{epi}(\psi),\delta}(x)  =\frac{1}{\delta} \int _{\{x\in \mathsf{epi}(\psi): \langle x, \theta\rangle< \eta^{-1}_{\theta}(\delta)\}} x \,dx \nonumber \\&
=&\frac{1}{\mathsf{vol}_{n+1}(\mathsf{epi}(\psi)\cap\mathsf{H}^{-}(\theta, \eta^{-1}_{\theta}(\delta)))} \int _{\mathsf{epi}(\psi)\cap\mathsf{H}^{-}(\theta, \eta^{-1}_{\theta}(\delta))} x \,dx. \label{def-y-theta} 
\end{eqnarray} 
In other words,  $y_{\theta}$ is the barycenter of
$\mathsf{epi}(\psi)\cap\mathsf{H}^{-}(\theta, \eta^{-1}_{\theta}(\delta))$, which is illustrated in Figure \ref{figure-1}.  \begin{figure}[htpb] 
	\centering
	\begin{tikzpicture}[node distance=2cm]
	\draw[->] (-5,0) -- (5,0) node[below] {$\mathbb{R}^{n}$};
	\draw[->] (0,-0.5) -- (0,5.5) node[left] {$\mathbb{R}$};
	\draw[-] (-0.3,0) -- (-0.3,0)node[below]{$o$};
	\draw[domain=-3:3] plot(\x, 1/2*\x*\x+0.5)node[right]{$\mathsf{epi}(\psi)$};
	\draw[dashed,->] (0,0)--(2.5,5) node[above] {$\theta$};
	\draw[dashed,-] (-4,4.5)--(4,0.5) node[above] {$\mathsf{H}\big(\theta,\eta^{-1}_{\theta}(\delta)\big)$};
    \draw[dashed,-] (-4.5,15/4)--(3.5,-0.25) node[left]{$\mathsf{H}$};
    \fill[domain=-2:1,fill=gray] (-2,5/2) plot(\x,1/2*\x*\x+0.5)--(1,1) plot(\x-1.5,-1/2*\x+1.5) node[left]{$\big(\mathsf{epi}(\psi)\big)\cap\mathsf{H}^{-}$};
    \filldraw[fill=black] (-0.5,1.75) circle (0.08) node[above] {$y_{\theta}$};
	\end{tikzpicture} 
	\caption{It illustrates that $y_{\theta}$ is a boundary point of $\mathsf{M}_{\delta}(\mathsf{epi}(\psi))$. 
The halfspace $\mathsf{H}^{-}\big(\theta,\eta^{-1}_{\theta}(\delta)\big)$, defined by 	
	the support hyperplane $\mathsf{H}=\mathsf{H}\big(\!\!-\theta, h_{\mathsf{M}_{\delta}(\mathsf{epi}(\psi))}(-\theta)\big)$ to $\mathsf{M}_{\delta}(\mathsf{epi}(\psi))$ in the direction of $(-\theta)$, cuts off a  set of volume $\delta$ from $\mathsf{epi}(\psi)$.}  \label{figure-1}
\end{figure}
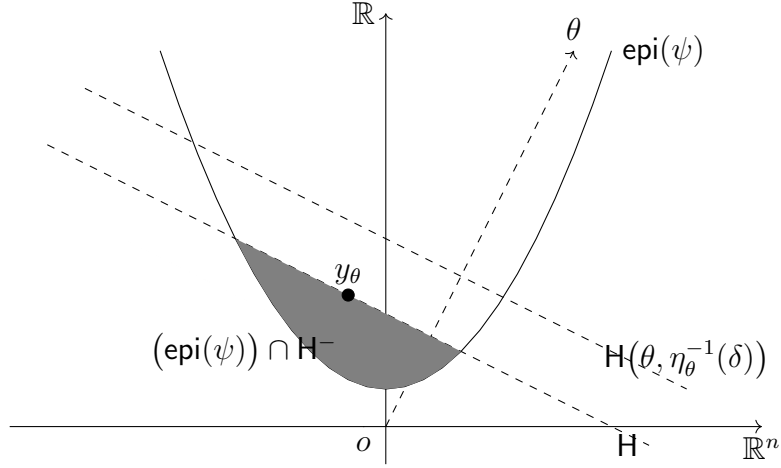 
\vskip 2mm We now prove the following result. 

\begin{prp}\label{pro1} Let $\psi\in \mathfrak{C}$. For $\delta>0$ and $\theta\in \Sigma_{\psi}$, one has $y_{\theta}\in \partial \big(\mathsf{M}_{\delta}(\mathsf{epi}(\psi))\big)$ and  \begin{align} h_{\mathsf{M}_{\delta}(\mathsf{epi}(\psi))}(-\theta)=\langle y_{\theta},-\theta \rangle.\label{supp-22-02}\end{align} 
\end{prp}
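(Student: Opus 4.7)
My plan is to establish both assertions at once by identifying $y_\theta$ as a maximizer of the linear functional $y \mapsto \langle y, -\theta\rangle$ on $\mathsf{M}_{\delta}(\mathsf{epi}(\psi))$; a bathtub-type rearrangement argument will do the job.

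The computation just before the proposition has already verified that $g_\theta(\langle \cdot, \theta\rangle)\in \mathcal{F}_{\mathsf{epi}(\psi)}$ (via \eqref{condition-g-theta-22-01}) and that the associated barycenter \eqref{def-y-theta} exists, whence $y_\theta \in \mathsf{M}_{\delta}(\mathsf{epi}(\psi))$ by definition \eqref{f-17}. Since every $y \in \mathsf{M}_{\delta}(\mathsf{epi}(\psi))$ has the form $y = \int x\, g(x)\, d\nu_{\mathsf{epi}(\psi),\delta}(x)$ for some $g\in \mathcal{F}_{\mathsf{epi}(\psi)}$, it therefore suffices to prove that for every such $g$,
\[
\int_{\mathbb{R}^{n+1}} \langle x, -\theta\rangle\, g_\theta(\langle x,\theta\rangle)\, d\nu_{\mathsf{epi}(\psi),\delta}(x) \;\geq\; \int_{\mathbb{R}^{n+1}} \langle x,-\theta\rangle\, g(x)\, d\nu_{\mathsf{epi}(\psi),\delta}(x).
\]

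To prove this, set $c = \eta_\theta^{-1}(\delta)$. Using the normalizations $\int g_\theta(\langle \cdot,\theta\rangle)\, d\nu_{\mathsf{epi}(\psi),\delta} = \int g\, d\nu_{\mathsf{epi}(\psi),\delta} = 1$, the difference of the two sides can be rewritten, by adding and subtracting $c$, as
\[
\int_{\mathbb{R}^{n+1}} \bigl(g_\theta(\langle x,\theta\rangle) - g(x)\bigr)\bigl(\langle x,-\theta\rangle + c\bigr)\, d\nu_{\mathsf{epi}(\psi),\delta}(x).
\]
The point of this reformulation is that the integrand is pointwise nonnegative. On $\{\langle x,\theta\rangle < c\}$ one has $g_\theta(\langle x,\theta\rangle) = 1 \geq g(x)$ and $\langle x,-\theta\rangle + c > 0$, while on $\{\langle x,\theta\rangle \geq c\}$ one has $g_\theta(\langle x,\theta\rangle) = 0 \leq g(x)$ and $\langle x,-\theta\rangle + c \leq 0$. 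Hence the displayed integral is $\geq 0$, giving $\langle y_\theta,-\theta\rangle \geq \langle y,-\theta\rangle$ for every $y \in \mathsf{M}_{\delta}(\mathsf{epi}(\psi))$, which is precisely \eqref{supp-22-02}. Since $y_\theta$ then maximizes a linear functional over the convex set $\mathsf{M}_{\delta}(\mathsf{epi}(\psi))$, the hyperplane $\{y : \langle y,-\theta\rangle = \langle y_\theta,-\theta\rangle\}$ supports $\mathsf{M}_{\delta}(\mathsf{epi}(\psi))$ at $y_\theta$, forcing $y_\theta \in \partial\bigl(\mathsf{M}_{\delta}(\mathsf{epi}(\psi))\bigr)$.

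The only real obstacle is bookkeeping: because $\mathsf{epi}(\psi)$ is unbounded, one must invoke $\theta \in \Sigma_\psi$ (so that the relevant slab has finite volume and $\eta_\theta^{-1}(\delta)$ is finite) together with the requirement built into $\mathcal{F}_{\mathsf{epi}(\psi)}$ that the first moment $\int y\, g(y)\, d\nu_{\mathsf{epi}(\psi),\delta}(y)$ exists, to justify that all three integrals in the computation are absolutely convergent and that the ``add and subtract $c$'' manipulation is legitimate. Once this is granted, the rearrangement argument is entirely elementary.
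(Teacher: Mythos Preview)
Your proof is correct and is essentially the same bathtub/rearrangement argument as the paper's: both hinge on the pivot constant $c=\eta_\theta^{-1}(\delta)$ and the sign analysis showing $(g_\theta-g)(c-\langle x,\theta\rangle)\ge 0$ pointwise, with the normalizations $\int g=\int g_\theta=1$ absorbing the added constant. The only cosmetic difference is presentation order---the paper starts from $0=\int(g-g_\theta)$, multiplies by $c$, and then bounds each regional piece by $\langle x,\theta\rangle$, whereas you package the same computation as a single nonnegative integrand.
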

\begin{proof} 
Let $\delta>0$ and $\theta\in \Sigma_{\psi}$. Clearly, $y_{\theta}\in \mathsf{M}_{\delta}(\mathsf{epi}(\psi))$. The desired result in Proposition \ref{pro1}  follows immediately once \eqref{supp-22-02} is verified.  Let $y\in \mathsf{M}_{\delta}(\mathsf{epi}(\psi))$. 
There exists a function 
$g\in \mathcal{F}_{\mathsf{epi}(\psi)}$,  $g: \mathbb{R}^{n+1}\to[0,1]$ such that
\begin{align} 1 & = \int_{\mathbb{R}^{n+1}}g\,d\nu_{\mathsf{epi}(\psi), \delta}=\frac{1}{\delta} \int_{\mathsf{epi}(\psi)}g\,dx, \label{equation-22-02} \\  y&= \int_{\mathbb{R}^{n+1}} x g(x)\,d\nu_{\mathsf{epi}(\psi), \delta}(x)=\frac{1}{\delta} \int_{\mathsf{epi}(\psi)} x g(x)\,dx.  \label{equation-22-02-1} \end{align}
 These, together with \eqref{condition-g-theta-22-01}, further imply that \begin{align*}   0&=\int_{\mathsf{epi}(\psi)}g\,dx- \int_{\mathsf{epi}(\psi)}g_{\theta}(\langle x, \theta\rangle) \,dx \\&=
 \int _{\{x\in \mathsf{epi}(\psi): \langle x, \theta\rangle\geq \eta^{-1}_{\theta}(\delta)\}} g(x) \,dx + \int _{\{x\in \mathsf{epi}(\psi): \langle x, \theta\rangle< \eta^{-1}_{\theta}(\delta)\}} \big(g(x)-1\big)\,dx.  \end{align*}  
 Dividing both sides by $\eta_{\theta}(\delta)$, it follows from  \eqref{def-y-theta},  \eqref{equation-22-02}, \eqref{equation-22-02-1} and  $0\leq g\leq 1$  that  
 \begin{align*}   0& = 
 \int _{\{x\in \mathsf{epi}(\psi): \langle x, \theta\rangle\geq \eta^{-1}_{\theta}(\delta)\}} g(x) \eta_{\theta}^{-1}(\delta)  \,dx + \int _{\{x\in \mathsf{epi}(\psi): \langle x, \theta\rangle< \eta^{-1}_{\theta}(\delta)\}} \big(g(x)-1\big)\eta_{\theta}^{-1}(\delta)  \,dx\\ & \leq 
 \int _{\{x\in \mathsf{epi}(\psi): \langle x, \theta\rangle\geq \eta^{-1}_{\theta}(\delta)\}} g(x) \langle x, \theta\rangle \,dx + \int _{\{x\in \mathsf{epi}(\psi): \langle x, \theta\rangle< \eta^{-1}_{\theta}(\delta)\}} \big(g(x)-1\big)  \langle x, \theta\rangle  \,dx \\ & = 
 \int _{\mathsf{epi}(\psi)} g(x) \langle x, \theta\rangle \,dx - \int _{\{x\in \mathsf{epi}(\psi): \langle x, \theta\rangle< \eta^{-1}_{\theta}(\delta)\}}    \langle x, \theta\rangle  \,dx \\&=\delta \cdot \big(\langle y, \theta\rangle-\langle y_{\theta}, \theta\rangle\big).  \end{align*} This gives $\langle y_{\theta}, -\theta\rangle \geq \langle y, -\theta\rangle$ for all $y\in \mathsf{M}_{\delta}(\mathsf{epi}(\psi))$, as desired. 
\end{proof} 
\vskip 3mm
 The following result provides a  way to calculate the support function of $\mathsf{M}_{\delta}(\mathsf{epi}(\psi))$ when $\psi\in \mathfrak{C}$  is a  supercoercive convex function, that is $\psi\in \mathfrak{C}$ satisfies
 \begin{align} \label{super-co} \lim_{\|x\|\rightarrow\infty} \frac{\psi(x)}{\|x\|}=+\infty.\end{align}
 \vskip 2mm
 \begin{cor} \label{cor-super-co-co} Suppose that $\psi\in \mathfrak{C}$ is a  supercoercive convex function. 
 Then, for any $\theta\in \mathbb{S}^n$ such that $\langle \theta, e_{n+1}\rangle >0$ and any $\delta>0$, one has 
that  $y_{\theta}\in \partial \big(\mathsf{M}_{\delta}(\mathsf{epi}(\psi))\big)$ and  \eqref{supp-22-02} holds. 
 \end{cor}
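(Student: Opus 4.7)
The plan is to reduce the statement to Proposition \ref{pro1} by checking that the supercoercivity hypothesis, combined with $\langle\theta,e_{n+1}\rangle>0$, forces $\theta\in\Sigma_{\psi}$. Once this inclusion is established, Proposition \ref{pro1} directly delivers both conclusions.

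\textbf{Step 1: Reformulate the slice $\mathsf{epi}(\psi)\cap\mathsf{H}^{-}(\theta,a)$.} Write $\theta=(\bar\theta,\theta_{n+1})\in\mathbb{R}^{n}\times\mathbb{R}$ with $\theta_{n+1}>0$. A point $(x,y)\in\mathbb{R}^{n}\times\mathbb{R}$ lies in $\mathsf{epi}(\psi)\cap\mathsf{H}^{-}(\theta,a)$ if and only if
\[
\psi(x)\leq y\leq \frac{a-\langle x,\bar\theta\rangle}{\theta_{n+1}}.
\]
Thus the projection onto $\mathbb{R}^n$ is the (closed convex) set $P_a=\{x\in\mathbb{R}^n:\theta_{n+1}\psi(x)+\langle x,\bar\theta\rangle\leq a\}$, and by Fubini
\[
\eta_{\theta}(a)=\int_{P_a}\left(\frac{a-\langle x,\bar\theta\rangle}{\theta_{n+1}}-\psi(x)\right)dx.
\]

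\textbf{Step 2: Show $P_a$ is bounded via supercoercivity.} By Cauchy--Schwarz,
\[
\theta_{n+1}\psi(x)+\langle x,\bar\theta\rangle\;\geq\;\|x\|\left(\theta_{n+1}\,\frac{\psi(x)}{\|x\|}-\|\bar\theta\|\right).
\]
Since $\theta_{n+1}>0$ and $\psi$ is supercoercive (i.e.\ \eqref{super-co} holds), the right-hand side tends to $+\infty$ as $\|x\|\to\infty$. Hence there exists $R=R(a)>0$ such that $P_a\subseteq R\,\mathbf{B}_2^{n}$; in particular $P_a$ is compact.

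\textbf{Step 3: Conclude finiteness.} A convex function is locally bounded on the interior of its domain, so $\psi$ is bounded on the compact set $P_a$; meanwhile the linear function $x\mapsto(a-\langle x,\bar\theta\rangle)/\theta_{n+1}$ is also bounded on $P_a$. Therefore the integrand above is bounded and $P_a$ has finite $n$-volume, giving $\eta_{\theta}(a)<\infty$. Since $a\in\mathbb{R}$ was arbitrary, $\theta\in\Sigma_{\psi}$.

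\textbf{Step 4: Invoke Proposition \ref{pro1}.} With $\theta\in\Sigma_{\psi}$ in hand, Proposition \ref{pro1} yields $y_{\theta}\in\partial\big(\mathsf{M}_{\delta}(\mathsf{epi}(\psi))\big)$ and the identity \eqref{supp-22-02}, completing the proof.

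The only nontrivial ingredient is Step 2, where one has to turn the qualitative supercoercivity into a quantitative tail bound that dominates the linear term $\langle x,\bar\theta\rangle$; the positivity of $\theta_{n+1}$ is essential here, since without it the epigraph has infinite-volume intersections with halfspaces of the form $\mathsf{H}^{-}(\theta,a)$ and $\theta$ need not belong to $\Sigma_{\psi}$.
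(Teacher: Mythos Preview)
Your proposal is correct and follows essentially the same route as the paper: both arguments reduce to Proposition~\ref{pro1} by showing that supercoercivity together with $\theta_{n+1}>0$ forces the projection $P_a=\{x:\theta_{n+1}\psi(x)+\langle x,\bar\theta\rangle\le a\}$ to be bounded, hence the slice $\mathsf{epi}(\psi)\cap\mathsf{H}^-(\theta,a)$ is bounded and has finite volume. Your Fubini computation in Step~1 is a bit more explicit than the paper's treatment, but the underlying idea and the key use of \eqref{super-co} are identical.
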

 
 \begin{proof} 
 In view of Proposition \ref{pro1}, it is enough to prove $$\Sigma_{\psi}=\big\{ \theta\in \mathbb{S}^n: \ \langle \theta, e_{n+1}\rangle >0 \big\}. $$ To this end, let  $\theta=(\tilde{\theta}, \theta_{n+1})$ with $\theta_{n+1}>0$ and  $a\in \mathbb{R}$ be fixed. If $\mathsf{epi}(\psi) \cap \mathsf{H}^{-1}(\theta, a)\neq \emptyset$ and $(x, t)\in \mathsf{epi}(\psi) \cap \mathsf{H}^{-1}(\theta, a)$,  then $t\geq \psi(x)$ and 
   $$\langle (x,\psi(x)), (\tilde{\theta}, \theta_{n+1})\rangle =\langle x, \tilde{\theta}\rangle + \theta_{n+1}\psi(x) \leq \langle x, \tilde{\theta}\rangle + \theta_{n+1} t= \langle (x,t), (\tilde{\theta}, \theta_{n+1})\rangle \leq a.$$ Dividing by $\|x\|$ and letting $\|x\|\to \infty$, it follows from  \eqref{super-co} that $$ +\infty= \lim_{\|x\|\rightarrow \infty} \bigg(\Big\langle \frac{x}{\|x\|},  \tilde{\theta} \Big \rangle +\theta_{n+1} \cdot \frac{\psi(x)}{\|x\|}\bigg) \leq  \lim_{\|x\|\rightarrow \infty} \frac{a}{\|x\|}=0.$$ This is impossible. Hence, $\mathsf{epi}(\psi) \cap \mathsf{H}^{-1}(\theta, a)$, if not an empty set,  contains all $(x, t)$ with $x\in E$ for some bounded set $E\subset \mathbb{R}^n$.  This in turn implies that $$\psi(x) \leq t\leq \frac{a}{\theta_{n+1}}+\frac{\|\tilde{\theta}\|}{\theta_{n+1}} \cdot \sup_{x\in E} \|x\|<\infty. $$ Consequently, either $\mathsf{vol}_{n+1}(\mathsf{epi}(\psi)\cap\mathsf{H}^{-}(\theta,a))=0$ if   $\mathsf{epi}(\psi) \cap \mathsf{H}^{-1}(\theta, a)=\emptyset$, or $\mathsf{vol}_{n+1}(\mathsf{epi}(\psi)\cap\mathsf{H}^{-}(\theta,a))<\infty$ if the set $\mathsf{epi}(\psi) \cap \mathsf{H}^{-1}(\theta, a)$ is a  bounded  nonempty set. 
 \end{proof} 
\vskip 2mm
 The next result establishes the affine-invariance of the Ulam floating functions for convex and  log-concave functions. 
\vskip 2mm 
\begin{prp}\label{affine-inv-map}
		Let $\psi\in  \mathfrak{C}$ and $T:\mathbb{R}^n\rightarrow \mathbb{R}^n$ be an invertible linear map.  Assume that $f=e^{-\psi}$ is integrable. Define $(f\circ T)(x)=f(Tx)$. For any $\delta\ge 0$, one has 
		\[\mathsf{M}_{\delta}(\psi \circ T)=(\mathsf{M}_{\delta |\det T|}(\psi))\circ T  \ \ \ \mathrm{ and } \ \ \  U_{\delta}(f\circ T)=(U_{\delta |\det T|}(f))\circ T.
		\] In particular, if $|\det T|=1$, then \[\mathsf{M}_{\delta}(\psi \circ T)=(\mathsf{M}_{\delta}(\psi))\circ T  \ \ \ \mathrm{ and } \ \ \  U_{\delta}(f\circ T)=(U_{\delta}(f))\circ T.
		\]
	\end{prp}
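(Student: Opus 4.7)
The strategy is to lift everything to $\mathbb{R}^{n+1}$ by working with epigraphs, and then reduce the claim to a general transformation rule for the Ulam floating set under invertible linear maps. First I would introduce the invertible linear map $\widetilde{T}:\mathbb{R}^{n+1}\to \mathbb{R}^{n+1}$ by $\widetilde{T}(x,t)=(Tx,t)$, which satisfies $|\det \widetilde{T}|=|\det T|$. Directly from the definition, $(x,t)\in \mathsf{epi}(\psi\circ T)$ iff $t\ge \psi(Tx)$ iff $\widetilde{T}(x,t)\in\mathsf{epi}(\psi)$, so
\[
\mathsf{epi}(\psi\circ T) \;=\; \widetilde{T}^{-1}\bigl(\mathsf{epi}(\psi)\bigr).
\]

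The core step is to establish the scaling identity
\[
\mathsf{M}_{\delta}\bigl(S^{-1}C\bigr) \;=\; S^{-1}\,\mathsf{M}_{\delta|\det S|}(C)
\]
for any closed convex $C\subset\mathbb{R}^{n+1}$ with nonempty interior and any invertible linear $S:\mathbb{R}^{n+1}\to\mathbb{R}^{n+1}$. I would prove this by the change of variables $y=S^{-1}z$, $dy=|\det S|^{-1}\,dz$. Given $h\in\mathcal{F}_{S^{-1}C}$ at level $\delta$, set $g(z)=h(S^{-1}z)$; then $g:\mathbb{R}^{n+1}\to[0,1]$, and a direct substitution yields
\[
\int_{\mathbb{R}^{n+1}} h\,d\nu_{S^{-1}C,\delta} \;=\; \frac{1}{\delta|\det S|}\int_C g(z)\,dz \;=\; \int_{\mathbb{R}^{n+1}} g\,d\nu_{C,\delta|\det S|},
\]
as well as
\[
\int_{\mathbb{R}^{n+1}} y\,h(y)\,d\nu_{S^{-1}C,\delta}(y) \;=\; S^{-1}\!\int_{\mathbb{R}^{n+1}} z\,g(z)\,d\nu_{C,\delta|\det S|}(z).
\]
The assignment $h\leftrightarrow g$ is a bijection between $\mathcal{F}_{S^{-1}C}$ at level $\delta$ and $\mathcal{F}_C$ at level $\delta|\det S|$, because both the normalization and the existence of the barycentre integral transfer symmetrically under $S$ and $S^{-1}$. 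Taking unions over these families produces the claimed identity.

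Applying this with $S=\widetilde{T}$ and $C=\mathsf{epi}(\psi)$, and using Step 1,
\[
\mathsf{M}_{\delta}\bigl(\mathsf{epi}(\psi\circ T)\bigr) \;=\; \widetilde{T}^{-1}\,\mathsf{M}_{\delta|\det T|}\bigl(\mathsf{epi}(\psi)\bigr) \;=\; \widetilde{T}^{-1}\,\mathsf{epi}\bigl(\mathsf{M}_{\delta|\det T|}(\psi)\bigr),
\]
and the rightmost set equals $\mathsf{epi}\bigl(\mathsf{M}_{\delta|\det T|}(\psi)\circ T\bigr)$ by the same epigraph identity used in Step 1. By the uniqueness of the convex function determined by its epigraph, $\mathsf{M}_{\delta}(\psi\circ T)=\mathsf{M}_{\delta|\det T|}(\psi)\circ T$; exponentiating gives the formula for $U_{\delta}$, and the case $|\det T|=1$ is immediate. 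The main obstacle is Step 2: one has to verify carefully that the correspondence $h\leftrightarrow g$ really is a bijection between the admissible families at the two different normalizations and that the integrability condition defining $\mathcal{F}$ transfers in both directions. Once this bookkeeping is in place, the rest of the proof is a mechanical chase on epigraphs.
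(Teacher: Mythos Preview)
Your proposal is correct and follows essentially the same approach as the paper: introduce the lift $\widetilde{T}(x,t)=(Tx,t)$, use the epigraph identity $\mathsf{epi}(\psi\circ T)=\widetilde{T}^{-1}(\mathsf{epi}(\psi))$, and then establish the transformation rule for the Ulam floating set via the change of variables $y=S^{-1}z$ that puts the families $\mathcal{F}$ at levels $\delta$ and $\delta|\det S|$ in bijection. The only cosmetic difference is that you state the core step as a general lemma for arbitrary closed convex $C$ and invertible $S$, whereas the paper carries out the same computation directly for $\mathsf{epi}(\psi)$ and $\widetilde{T}$.
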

\vskip 2mm
	\begin{proof} It is enough to only prove the statement \[\mathsf{M}_{\delta}(\psi \circ T)=(\mathsf{M}_{\delta |\det T|}(\psi))\circ T.
		\] Let $\widetilde{T}$ be the map defined by  $
		\widetilde{T}(x, x_{n+1})=(Tx, x_{n+1}).$ It can be checked that  $$\widetilde{T}^{-1}(x, x_{n+1})=(T^{-1}x,x_{n+1}).$$ Thus,   $
		\mathsf{epi}(\psi\circ T)=\widetilde{T}^{-1}(\mathsf{epi}(\psi)).
		$  According to Definition \ref{def-M-CC}, if $g\in \mathcal{F}_{\mathsf{epi}(\psi\circ T)},$ then $\widetilde{g}= g\circ (\widetilde{T}^{-1})$ satisfies that $\widetilde{g}: \mathbb{R}^{n+1}\to [0, 1]$,  $\int _{\mathsf{epi}(\psi)} \widetilde{g}(\hat{z})\,d\hat{z}=\delta\cdot |\det T|$, and   $$\int _{\mathsf{epi}(\psi\circ T)} zg(z)\,dz=\widetilde{T}^{-1} \Big(\int _{\mathsf{epi}(\psi)}\frac{ \hat{z}\widetilde{g}(\hat{z})}{|\det T|} \,d\hat{z}\Big).
		$$ 
		This further yields that  \[ \mathsf{M}_{\delta}
		(\mathsf{epi}(\psi\circ T))=\mathsf{M}_{\delta}(\widetilde{T}^{-1}(\mathsf{epi}
		(\psi)))=\widetilde{T}^{-1}(\mathsf{M}_{\delta   |\det T|}(\mathsf{epi}(\psi))).  \]
	It follows from   the definition of $\mathsf{M}_{\delta}(\psi)$  that 
	$$\mathsf{epi}(\mathsf{M}_{\delta}(\psi \circ T))
	=\mathsf{M}_{\delta}(\mathsf{epi}(\psi\circ T))  =\widetilde{T}^{-1}(\mathsf{M}_{\delta |\det T| }(\mathsf{epi}(\psi)))=
		\widetilde{T}^{-1}(\mathsf{epi}(\mathsf{M}_{\delta  |\det T|}(\psi))).
		$$  
	   Thus, for any $z=(x, s)\in \mathsf{epi}(\mathsf{M}_{\delta}(\psi \circ T))$, one obtains that 
	   $$s\geq (\mathsf{M}_{\delta}(\psi \circ T))(x).$$ The point $(x, s)$ is also in $\widetilde{T}^{-1}(\mathsf{epi}(\mathsf{M}_{\delta  |\det T|}(\psi)))$, which further implies  
	   $$
	   \widetilde{T}(x, s)=(Tx, s)\in \mathsf{epi}(\mathsf{M}_{\delta  |\det T|}(\psi)), 
	   $$ and thus $s\geq (\mathsf{M}_{\delta  |\det T|}(\psi))(Tx)$. 
	   This can hold if and only if 
	   $$
	   (\mathsf{M}_{\delta}(\psi \circ T))(x)=(\mathsf{M}_{\delta  |\det T|}(\psi))(Tx).
	   $$ 
	   As  $x$ is arbitrary, one immediately gets
	    \[
		\mathsf{M}_{\delta}(\psi \circ T)=(\mathsf{M}_{\delta |\det T|}(\psi))\circ T.
		\]  This completes the proof. 
	   		\end{proof}

 \vskip 3mm

\section{Ulam floating function and   affine surface area}\label{ul-4}
 
The section is devoted to the proof of Theorem \ref{mainthem2}.  We will  need more  preparation and several preliminary results. 
\par
We first note that,  under the assumptions of Theorem \ref{mainthem2},  the integral term in Theorem \ref{mainthem2}  is finite.  To see that  we recall the definition of the {\em rolling function} $r_C: \partial C \to [0, \infty)$ of a closed convex set $C$ in $\mathbb{R}^{n+1}$, which was 
introduced in \cite{Werner:1990}, see also \cite{Werner:2019},  as follows:
If   the outer unit normal  $N_{C}(z)$ at $z \in \partial C$ is unique, then 
$r_{C}(z)$ is the radius of the biggest Euclidean ball contained in $C$ that touches $C$ at $z$,
\[
r_{C}(z)=\max\{\rho:\mathbf{B}_2^{n+1}(z-\rho N_{C}(z),\rho)\subseteq C\}.
\] 
If $N_{C}(z)$ is not unique, then $r_{C}(z)=0$. 
In particular, if  $C=\mathsf{epi}(\psi)$ with $\psi\in \mathfrak{C}$, 
we write 
\begin{equation}\label{99}
	r_{\psi}(x)=r_{\mathsf{epi}(\psi)}(z_x),
\end{equation}  where   $z_x=(x,\psi(x)) \in \partial \mathsf{epi}(\psi)$. 
\vskip 2mm
Let $\psi\in \mathfrak{C}$. Since $\psi$ is continuous,   $\mathsf{epi}(\psi)$ is a closed set.
For functions $\psi$ such that $e^{-\psi}$ is integrable, we have
that $r_{\psi}(z)$ is bounded for every $z\in \partial \mathsf{epi}(\psi)$.
It follows that $\mathsf{epi}(\psi)$ contains a Euclidean ball
with radius $r_{\psi}(z)$ that has $z$ as an element. 
\par
Let now $\psi\in \mathfrak{C}$ be a convex function satisfying the above assumptions and
let  $z_{x}=(x,\psi(x))$. Recall that  $\kappa_{\psi}(z_x)$  and $N_{\psi}(z_x)$ are  the Gauss curvature and the outer unit normal of $\partial\mathsf{epi}(\psi)$ at $z_x$, respectively.  
We have for almost all $x \in \mathbb{R}^n$ (see e.g., \cite{Werner:2019}) that 
\begin{align}\label{curvature-cont-1} 
r_{\psi}(x)\leq (\kappa_{\psi}(z_x))^{-\frac{1}{n}}=
\frac{(1+\|\nabla\psi(x)\|^2)^\frac{n+2}{2n}}{(\det\nabla^2\psi(x))^\frac{1}{n}}.
\end{align}    
\par
\noindent
It has been proved in \cite[Lemma 8]{Werner:2019} that, if  $\psi:\mathbb{R}^n\to\mathbb{R}$ is a convex function such that 
	$e^{-\psi}$ is integrable, then for all $0\leq\alpha<1$,
	\begin{align}\label{integrable-001} 
	\int_{\mathbb{R}^n}\frac{(1+\|\nabla\psi(x)\|^2)^\frac{1}{2}}{r_{\psi}(x)^\alpha}e^{-\psi(x)}dx<\infty.
	\end{align} 
	In particular, \eqref{integrable-001} holds for $\alpha=\frac{n}{n+2}$.  
Together with \eqref{curvature-cont-1}, one gets  
\begin{align}\label{integrable-002} 
\int_{\mathbb{R}^n}(\det(\nabla^2(\psi(x)))^\frac{1}{n+2}e^{-\psi(x)}
dx\leq\int_{\mathbb{R}^n}\frac{(1+\|\nabla\psi(x)\|^2)^\frac{1}{2}}{r_{\psi}(x)^{\frac{n}{n+2}}}e^{-\psi(x)}\,dx<\infty.
\end{align}  
If the Gauss curvature $\kappa_{\psi}(z_x)$  is $0$ almost everywhere, then all expressions in the  identities of Theorem  \ref{mainthem2}  are $0$.
This is in particular the case when $\psi$ is piecewise affine.   
\vskip 3mm
In  the following lemma we write in short  $\rho\mathbf{B}_2^{n+1}= \mathbf{B}_2^{n+1}(o, \rho)$. 
Note that, for $\delta>0$ small enough, $\mathsf{M}_{\delta}(\rho\mathbf{B}_2^{n+1})$ is again a Euclidean ball  centered at the origin $o$.
\vskip 2mm
\begin{lem}\label{11}  
Let  $\delta>0$  be small enough and  $\Delta_h$ be the height of a cap of 
$\rho\mathbf{B}_2^{n+1}$ of volume   $\delta$. Let $\Delta_\rho$ be the difference of the radii of 
$\rho\mathbf{B}_2^{n+1}$ and $\mathsf{M}_{\delta}(\rho\mathbf{B}_2^{n+1})$.   Then, 
	\[
	\lim\limits_{\delta\to0^+}\frac{\Delta_h}{\Delta_{\rho}}=
	\frac{n+4}{n+2}.
	\]
\end{lem}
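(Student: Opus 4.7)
By rotational symmetry of $\rho\mathbf{B}_2^{n+1}$ about $o$, the Ulam floating set $\mathsf{M}_{\delta}(\rho\mathbf{B}_2^{n+1})$ is itself a Euclidean ball centered at $o$. Its radius is therefore $h_{\mathsf{M}_{\delta}(\rho\mathbf{B}_2^{n+1})}(e_{n+1})$, and by the barycentric description of Ulam floating bodies (Proposition \ref{pro1}, applied with $\theta=-e_{n+1}$, or equivalently \cite[Proposition 2.1]{Huang:2019a}), this support function equals the $(n+1)$-st coordinate of the barycenter $y$ of the cap
$$\rho\mathbf{B}_2^{n+1}\cap\mathsf{H}^{+}(e_{n+1},\rho-\Delta_h).$$
Hence $\Delta_\rho=\rho-\langle y,e_{n+1}\rangle$, and the problem reduces to the asymptotics of this centroid as $\Delta_h\to 0^+$.

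Next I would parametrize the cap by slabs $\{x_{n+1}=t\}$ with $t\in[\rho-\Delta_h,\rho]$, whose cross-section is an $n$-ball of radius $(\rho^2-t^2)^{1/2}$. Writing $h=\Delta_h$ and substituting $u=\rho-t$ converts both the volume and the first moment of the cap into integrals on $[0,h]$ of the form
$$V(h)=\mathsf{vol}_n(\mathbf{B}_2^n)\int_0^h\bigl(u(2\rho-u)\bigr)^{n/2}\,du,\qquad
M(h)=\mathsf{vol}_n(\mathbf{B}_2^n)\int_0^h u\,\bigl(u(2\rho-u)\bigr)^{n/2}\,du,$$
and gives the identity $\Delta_\rho=M(h)/V(h)$.

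The key step is then the small-$h$ asymptotics. On $[0,h]$ one has $\bigl(u(2\rho-u)\bigr)^{n/2}=(2\rho u)^{n/2}\bigl(1+O(h/\rho)\bigr)$ uniformly in $u$, so
$$V(h)=\mathsf{vol}_n(\mathbf{B}_2^n)(2\rho)^{n/2}\tfrac{2h^{(n+2)/2}}{n+2}\bigl(1+O(h)\bigr),\qquad
M(h)=\mathsf{vol}_n(\mathbf{B}_2^n)(2\rho)^{n/2}\tfrac{2h^{(n+4)/2}}{n+4}\bigl(1+O(h)\bigr).$$
Dividing gives $\Delta_\rho=M(h)/V(h)=\tfrac{n+2}{n+4}\,h\,\bigl(1+O(h)\bigr)$, and therefore
$$\frac{\Delta_h}{\Delta_\rho}=\frac{h}{\Delta_\rho}\longrightarrow \frac{n+4}{n+2}\qquad\text{as }\delta\to 0^{+},$$
since the relation $V(h)=\delta$ forces $h\to 0^+$ as $\delta\to 0^+$.

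I expect no real obstacle here: the only things to be careful about are (i) invoking the barycentric description of $\mathsf{M}_{\delta}$ in the bounded setting of the ball rather than the epigraph setting of Proposition \ref{pro1}, which is immediate from the definition, and (ii) keeping the error estimate $O(h)$ uniform in $u\in[0,h]$ so that the leading terms can be divided legitimately—this is straightforward from $0\le u\le h<2\rho$.
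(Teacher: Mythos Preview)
Your proposal is correct and follows essentially the same route as the paper: both reduce, via the barycentric description of $\mathsf{M}_\delta$, to the identity $\Delta_\rho=\dfrac{\int_0^{\Delta_h}(2\rho t-t^2)^{n/2}t\,dt}{\int_0^{\Delta_h}(2\rho t-t^2)^{n/2}\,dt}$ and then evaluate the limit of $\Delta_h/\Delta_\rho$. The only cosmetic difference is that the paper computes this limit by L'Hospital's rule, whereas you extract the leading asymptotics $(2\rho u)^{n/2}(1+O(h))$ directly; both are equally valid.
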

\begin{proof} Let $\Delta_h$ be the height of a cap of 
$\rho\mathbf{B}_2^{n+1}$ of volume  $\delta$. With the   change  of variable $y=(z, \rho-t)\in \rho\mathbf{B}_2^{n+1}$ with $t\in (0, \rho)$ and $z\in (\rho^2-(\rho-t)^2)^\frac{1}{2}\mathbf{B}_2^{n}=(2\rho t-t^2)^{\frac{1}{2}}\mathbf{B}_2^{n}$,  the following hold:   
\begin{align*} &\delta  =\mathsf{vol}_{n+1}(\rho\mathbf{B}_2^{n+1}\cap\mathsf{H}^{+}(e_{n+1},\Delta_h))  
		=\mathsf{vol}_n(\mathbf{B}_2^n) \int_{0}^{\Delta_h}
		(2\rho t-t^2)^\frac{n}{2}\,dt,  \\ &\int_{\rho\mathbf{B}_2^{n+1}\cap\mathsf{H}^{+}(e_{n+1},\Delta_h)} y\,dy =e_{n+1}\cdot  \mathsf{vol}_n(\mathbf{B}_2^n) \int_{0}^{\Delta_h}
		(2\rho t-t^2)^\frac{n}{2}(\rho-t)\,dt.
\end{align*} 
These, together with \eqref{f-17} and $d\nu_{\mathsf{epi}(\psi),  \delta} =\delta^{-1}\mathbf{1}_{\mathsf{epi}(\psi)}dx,$ 
 result in 
	\[
	\Delta_{\rho}=\frac{\int_{0}^{\Delta_h}(2\rho t-t^2)^\frac{n}{2}t\,dt}{\int_{0}^{\Delta_h}(2\rho t-t^2)^\frac{n}{2}\,dt}.
	\] Clearly, if $\delta\rightarrow 0^+$, then $\Delta_h\to0^+$. It follows from  L'Hospital's rule that 
		\begin{align*}
			\lim_{\delta\to0^+}\frac{\Delta_h}{\Delta_\rho}&=\lim_{\Delta_h\to0^+}
			\frac{\Delta_h \int_{0}^{\Delta_h} (2\rho t-t^2)^\frac{n}{2}\,dt}{\int_{0}^{\Delta_h}(2\rho t-t^2)^\frac{n}{2}t\,dt} =1+\lim_{\Delta_h\to0^+}\frac{\ \int_{0}^{\Delta_h} (2\rho t-t^2)^\frac{n}{2}\,dt}{(2\rho\Delta_h-\Delta_h^2)^\frac{n}{2}\Delta_h}\\&=1+\lim_{\Delta_h\to0^+}\frac{(2\rho\Delta_h -\Delta_h^2)^\frac{n}{2}}
			{(2\rho\Delta_h -\Delta_h^2)^\frac{n}{2}+n(2\rho\Delta_h -\Delta_h^2)^\frac{n-2}{2} (\rho-\Delta_h)\Delta_h}			\\&=1+\lim_{\Delta_h\to0^+}\frac{(2\rho -\Delta_h )}
			{(2\rho  -\Delta_h )+n(\rho-\Delta_h)}	 =\frac{n+4}{n+2}.
		\end{align*}  This completes the proof.
\end{proof}
\vskip 3mm
The following lemma will be needed in order to use the Dominated Convergence Theorem  in the proof of  Theorem \ref{mainthem2}. 
\vskip 2mm
\begin{lem}\label{mainlem1}
	Let $\psi\in \mathfrak{C}$ be such that 
	$0< \int_{\mathbb{R}^n}e^{-\psi(x)}dx<\infty$. There
	exist $\delta_0>0$ and a constant  $\alpha_n>0$ such that 	\[
	0\leq\frac{\mathsf{M}_{\delta}(\psi)(x)-\psi(x)}{\delta^\frac{2}{n+2}}
	\leq  \alpha_n \frac{(1+\|\nabla\psi(x)\|^2)^\frac{1}{2}}{r_{\psi}(x)^{\frac{n}{n+2}}},
	\] holds for all $\delta\in (0, \delta_0)$ and  for all $x\in\mathbb{R}^n$, 
	where $r_{\psi}(x)$ is given by \eqref{99}. \end{lem}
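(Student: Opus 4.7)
The plan has three pieces: reduce to a tangent-ball question, extract the floating ball's radial shrinkage from Lemmas \ref{lemma1} and \ref{11}, and read off the vertical estimate by intersecting the floating ball with the column over $x$. The left inequality $\mathsf{M}_\delta(\psi)(x)\ge \psi(x)$ is immediate, as it was already recorded after the definition that $\mathsf{M}_\delta(\mathsf{epi}(\psi))\subseteq\mathsf{epi}(\psi)$. For the upper bound we may assume $r_\psi(x)>0$, in which case $\nabla\psi(x)$ exists, $N_\psi(z_x)$ is uniquely defined, and writing $\rho=r_\psi(x)$ and $c=(1+\|\nabla\psi(x)\|^2)^{1/2}$, the inscribed ball $B=\mathbf{B}_2^{n+1}(c_x,\rho)$ with center $c_x = z_x - \rho N_\psi(z_x) = (x-\rho\nabla\psi(x)/c,\,\psi(x)+\rho/c)$ lies in $\mathsf{epi}(\psi)$ and is internally tangent at $z_x$. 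The first key observation is that $\mathsf{M}_{\delta}(B)\subseteq \mathsf{M}_{\delta}(\mathsf{epi}(\psi))$: given $g\in\mathcal{F}_B$, its extension by zero to $\mathbb{R}^{n+1}$ lies in $\mathcal{F}_{\mathsf{epi}(\psi)}$ and yields the same barycenter. By rotational symmetry $\mathsf{M}_{\delta}(B)$ is a concentric ball of radius $\rho-\Delta_\rho$, where $\Delta_\rho$ is the distance from $c_x$ to the barycenter of a cap of $B$ of volume $\delta$---exactly the quantity appearing in the proof of Lemma \ref{11}.

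The second step is a size estimate for $\Delta_\rho$ in terms of $\delta$ and $\rho$. Applying Lemma \ref{lemma1} with $r=\rho$ to a cap of $B$ of volume $\delta$, the cap height $\Delta_h$ satisfies $\Delta_h\leq C_n\,\delta^{2/(n+2)}/\rho^{n/(n+2)}$ as long as $\Delta_h\leq\rho$. Using the explicit formula
\[
\Delta_\rho=\frac{\int_0^{\Delta_h} t(2\rho t-t^2)^{n/2}\,dt}{\int_0^{\Delta_h}(2\rho t-t^2)^{n/2}\,dt}
\]
read off from the proof of Lemma \ref{11}, one bounds $\Delta_\rho\leq 2^{n/2}\Delta_h$ on $\{0<\Delta_h\le \rho\}$ (the numerator is controlled by $(2\rho)^{n/2}\int t^{n/2+1}dt$ and the denominator from below by $\rho^{n/2}\int t^{n/2}dt$), producing $\Delta_\rho\leq K_n\,\delta^{2/(n+2)}/\rho^{n/(n+2)}$ for a dimensional constant $K_n$. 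To convert the radial shrinkage into a vertical estimate at $x$, note that the horizontal distance from $x$ to the projection of $c_x$ is $\rho\|\nabla\psi(x)\|/c$; provided $\Delta_\rho\leq\rho-\rho\|\nabla\psi(x)\|/c$, the vertical line $\{x\}\times\mathbb{R}$ crosses $\mathsf{M}_\delta(B)$ in a chord whose lowest endpoint is
\[
s^*=\psi(x)+\frac{\rho}{c}-\sqrt{\frac{\rho^2}{c^2}-2\rho\Delta_\rho+\Delta_\rho^2}.
\]
Since $(x,s^*)\in \mathsf{M}_{\delta}(B)\subseteq\mathsf{M}_{\delta}(\mathsf{epi}(\psi))=\mathsf{epi}(\mathsf{M}_{\delta}(\psi))$ one has $\mathsf{M}_{\delta}(\psi)(x)\leq s^*$, and rewriting $s^*-\psi(x)=(\rho/c)(1-\sqrt{1-v})$ with $v=2c^2\Delta_\rho/\rho-c^2\Delta_\rho^2/\rho^2$ and using $1-\sqrt{1-v}\leq v$ on $[0,1]$ gives $\mathsf{M}_{\delta}(\psi)(x)-\psi(x)\leq 2c\,\Delta_\rho$, which after substituting the estimate on $\Delta_\rho$ is precisely the claim with $\alpha_n=2K_n$.

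The main obstacle is to make this chain of inequalities uniform in $x$ for a single $\delta_0$: the side condition $\Delta_\rho\leq \rho/(c(c+\|\nabla\psi(x)\|))$ (needed so that the vertical chord exists and the radical is real) degenerates when $r_\psi(x)$ is small or $\|\nabla\psi(x)\|$ is large, so a naive choice of $\delta_0$ would depend on $x$. I expect this to be resolved as follows. First, at the threshold the bound already matches the target right-hand side up to a universal constant, so enlarging $\alpha_n$ covers the boundary regime. Second, in the complementary regime one can argue via a local truncation in the spirit of Lemma \ref{lem: UnaffectedCut} (near $z_x$ only a bounded neighborhood of $\mathsf{epi}(\psi)$ contributes to $\mathsf{M}_\delta$), which brings the problem back to a convex-body calculation where the uniform $\delta_0$ from \cite{Huang:2019,Werner:2019} applies and absorbs the dependence on $x$ into a dimensional constant.
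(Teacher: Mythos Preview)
Your direct approach via the inscribed tangent ball is genuinely different from the paper's, but it has a real gap precisely where you flag it. The side condition you need---that the vertical line $\{x\}\times\mathbb{R}$ meets $\mathsf{M}_\delta(B)$---requires roughly $\Delta_\rho\lesssim r_\psi(x)/c^2$, and your bound $\Delta_\rho\le K_n\,\delta^{2/(n+2)}/r_\psi(x)^{n/(n+2)}$ only guarantees this when $\delta$ is small compared to a power of $r_\psi(x)$. Since $r_\psi(x)$ can be arbitrarily small (and $\|\nabla\psi(x)\|$ arbitrarily large), no fixed $\delta_0$ survives. Worse, once $\delta$ exceeds $\mathsf{vol}_{n+1}(B)=\mathsf{vol}_{n+1}(\mathbf{B}_2^{n+1})\,r_\psi(x)^{n+1}$ the set $\mathsf{M}_\delta(B)$ is empty and the inclusion $\mathsf{M}_\delta(B)\subseteq\mathsf{M}_\delta(\mathsf{epi}(\psi))$ carries no information at all. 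Your proposed fixes do not close this: ``at the threshold the bound matches'' only says something \emph{at} the threshold, not for $\delta$ between the threshold and $\delta_0$; and invoking Lemma~\ref{lem: UnaffectedCut} gives a local neighborhood whose size again depends on $x$, so the uniformity problem merely moves elsewhere.

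The paper bypasses all of this with a two-line comparison argument. One checks that the convex floating set sits inside the Ulam floating set, $(\mathsf{epi}(\psi))_\delta\subseteq\mathsf{M}_\delta(\mathsf{epi}(\psi))$ (for each admissible direction the barycenter of the volume-$\delta$ cap lies strictly inside that cap, so the Ulam support hyperplane is no deeper than the floating hyperplane). This yields $\mathsf{M}_\delta(\psi)\le\psi_\delta$ pointwise, and then \cite[Lemma~5]{Werner:2019}, which already provides the uniform bound
\[
0\le\frac{\psi_\delta(x)-\psi(x)}{\delta^{2/(n+2)}}\le\alpha_n\,\frac{(1+\|\nabla\psi(x)\|^2)^{1/2}}{r_\psi(x)^{n/(n+2)}}
\]
for all $\delta\in(0,\delta_0)$ with a single $\delta_0$, finishes the proof immediately. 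The hard work of securing uniformity in $x$ was thus done once, for the convex floating function, and is inherited here for free through the containment.
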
 
\vskip 2mm		
	\begin{proof}   Let $\psi_{\delta}$ be as in \eqref{def-floating-f-1}.  It has been proved in   \cite[Lemma 5]{Werner:2019}  that there
	exist $\delta_0>0$ and a constant  $\alpha_n>0$  such that 	\[
	0\leq\frac{\psi_{\delta}(x)-\psi(x)}{\delta^\frac{2}{n+2}}
	\leq \alpha_n \frac{(1+\|\nabla\psi(x)\|^2)^\frac{1}{2}}{r_{\psi}(x)^{\frac{n}{n+2}}},
	\] holds for all $\delta\in\ (0, \delta_0)$ and  for all $x\in\mathbb{R}^n$.  It can be checked that for any $\delta\in (0, \delta_0)$, $(\mathsf{epi}(\psi))_{\delta}\subseteq\mathsf{M}_{\delta}
		(\mathsf{epi}(\psi))$, where $(\mathsf{epi}(\psi))_{\delta}$ is the floating set of $\mathsf{epi}(\psi)$ defined in \eqref{floating-epi}. 	Hence, $\psi_{\delta}\geq \mathsf{M}_{\delta}(\psi)$, and this in turn further yields \[
	0\leq\frac{\mathsf{M}_{\delta}(\psi)(x)-\psi(x)}{\delta^\frac{2}{n+2}}
	\leq  \alpha_n \frac{(1+\|\nabla\psi(x)\|^2)^\frac{1}{2}}{r_{\psi}(x)^{\frac{n}{n+2}}},
	\]  for all $\delta\in\ (0, \delta_0)$ and  for all $x\in\mathbb{R}^n$.  
	\end{proof} 
\vskip 3mm	
We apply 	Lemma \ref{mainlem1} to the log-concave function $f=e^{-\psi}$ where  $\psi\in \mathfrak{C}$ is such that 
	$0< \int_{\mathbb{R}^n}e^{-\psi(x)}dx<\infty$. As $1-e^{-t}\leq t$ for all $t\in (0, \infty)$, one gets 
	$$ f-U_{\delta}(f)=e^{-\psi}-e^{-\mathsf{M}_{\delta}(\psi)} =e^{-\psi}\cdot \big(1-e^{-(\mathsf{M}_{\delta}(\psi)-\psi)} \big)\leq e^{-\psi}\cdot \big(\mathsf{M}_{\delta}(\psi)-\psi \big).$$ 
	It then follows from Lemma \ref{mainlem1}  that there
	exist $\delta_0>0$ and a constant  $\alpha_n>0$ such that  
	\[
	0\leq \frac{f-U_{\delta}(f)}{\delta^\frac{2}{n+2}}\leq \frac{e^{-\psi}\cdot \big(\mathsf{M}_{\delta}(\psi)-\psi \big)}{\delta^\frac{2}{n+2}} \leq \alpha_n \frac{(1+\|\nabla\psi(x)\|^2)^\frac{1}{2}}{r_{\psi}(x)^{\frac{n}{n+2}}}f(x),
	\] holds for all $\delta\in\ (0, \delta_0)$ and  for all $x\in\mathbb{R}^n$. Together with \eqref{integrable-001}  and the Dominated Convergence Theorem, one gets \begin{align} \lim_{\delta\to0^+} \bigg(\delta^{-\frac{2}{n+2}}  \int_{\mathbb{R}^n}\big( f-   U_{\delta}(f)\big)\,dx \bigg)& = \lim_{\delta\to0^+}\bigg(\delta^{-\frac{2}{n+2}} \int_{\mathbb{R}^n}(e^{-\psi(x)}-e^{-\mathsf{M}_{\delta}(\psi)(x)})dx\bigg) \nonumber \\ &= \int_{\mathbb{R}^n} \lim_{\delta\to0^+}\frac{ (e^{-\psi(x)}-e^{-\mathsf{M}_{\delta}(\psi)(x)})}{\delta^{\frac{2}{n+2}}}\,dx. \label{dct-11}  \end{align} 
	Similarly, the following formulas hold: 
	\begin{align} \lim_{\delta\to0^+} \bigg(\delta^{-\frac{2}{n+2}}  \int_{\mathbb{R}^n}\big|
	\mathsf{M}_{\delta}(\psi)(x)-\psi(x)\big|e^{-\psi(x)}\,dx\bigg)& = \int_{\mathbb{R}^n}  \lim_{\delta\to0^+} \frac{|
	\mathsf{M}_{\delta}(\psi)(x)-\psi(x)|e^{-\psi(x)}}{\delta^{\frac{2}{n+2}}} \,dx \nonumber\\ &= \int_{\mathbb{R}^n} \lim_{\delta\to0^+}\frac{ (e^{-\psi(x)}-e^{-\mathsf{M}_{\delta}(\psi)(x)})}{\delta^{\frac{2}{n+2}}}\,dx. \label{dct-11-22}  \end{align}

\vskip 3mm 	

Let us recall the  main  theorem.  
\vskip 2mm\noindent {\bf Theorem \ref{mainthem2}.}  {\em
	Let $\psi:\mathbb{R}^n\to\mathbb{R}$ be a convex function such that
	\[
	0<\int_{\mathbb{R}^n}e^{-\psi(x)}dx<\infty.
	\]
 Then  
\begin{align} 
\lim_{\delta\to0^+} \bigg(\delta^{-\frac{2}{n+2}} \int_{\mathbb{R}^n}(e^{-\psi(x)}-e^{-\mathsf{M}_{\delta}(\psi)(x)})\,dx\bigg)&=  \lim_{\delta\to0^+} \bigg(\delta^{-\frac{2}{n+2}}  \int_{\mathbb{R}^n}\big|
	\mathsf{M}_{\delta}(\psi)(x)-\psi(x)\big|e^{-\psi(x)}\,dx\bigg) \nonumber \\&=c_{n+1}\int_{\mathbb{R}^n}\big(\det(\nabla^2\psi(x))\big)^{\frac{1}{n+2}}e^{-\psi(x)}dx,\label{main-theo--22}
\end{align} where $c_{n+1}$ is the constant given in \eqref{003}, i.e.,   
\begin{equation*} 
	c_{n+1}=\frac{n+2}{2(n+4)}\Big(\frac{n+2}{\mathsf{vol}_n(\mathbf{B}_2^n)}\Big)^{\frac{2}{n+2}}.
\end{equation*}}
\vskip 2mm
\begin{proof}  	
Let $z_x=(x,\psi(x))$ as above.	According to \eqref{dct-11} and
 \eqref{dct-11-22}, the desired formula  \eqref{main-theo--22} will hold  if the following limit is verified.  For almost all $x\in \mathbb{R}^n$,  \begin{align} \lim_{\delta\to 0^+} \frac{\mathsf{M}_{\delta}(\psi)(x)-\psi(x)}{\delta^{\frac{2}{n+2}}} =c_{n+1}\cdot \big(\det(\nabla^2\psi(x))\big)^{\frac{1}{n+2}}. \label{desied-lim-1} \end{align} As $\nabla^2\psi$ exists almost everywhere, the proof of \eqref{desied-lim-1}  can be separated into two cases. 	
	\vskip 2mm 
	\noindent {\it Case 1: The Hessian matrix  $\nabla^2\psi (x)$ is positive definite and  $N_{\psi}(z_x)$
		exists uniquely at $x\in\mathbb{R}^n$.} 
		 
\par
\noindent		
		 In this case, \eqref{desied-lim-1}  follows immediately from the following claim:  
	for all $\varepsilon>0$ small enough, there exists $\delta_1=\delta_1(x,\varepsilon)$ such that for all 
	$0<\delta<\delta_1$,
	\begin{align}
	(1-\varepsilon)^3
	\leq\frac{\mathsf{M}_{\delta}(\psi)(x)-\psi(x)}{c_{n+1}\cdot  \delta^{\frac{2}{n+2}}\cdot  \det(\nabla^2\psi(x))^{\frac{1}{n+2}} } 
	\leq (1-\varepsilon)^{-5}. \label{control-1}
	\end{align} 
\par
\noindent	
	We first assume that  the indicatrix of Dupin at $z_x\in\partial\mathsf{epi}(\psi)$ is a sphere of radius $\sqrt{\rho}$. Then  (see, e.g.,   \cite[ equation (5)]{Schutt:2003}) 
	   \begin{equation}\label{eqqq1}
	\rho=(\kappa_{\psi}(z_x)) ^{-\frac{1}{n}}.
	\end{equation}  
	It follows from \eqref{qu1} that  
	\begin{equation}\label{w0} \left<-N_{\psi}(z_{x}),e_{n+1}\right>=(1+\|\nabla\psi(x)\|^2)^{-\frac{1}{2}},~~
	\rho^{-\frac{n}{n+2}}(1+\|\nabla\psi(x)\|^2)^{\frac{1}{2}}=\det(\nabla^2\psi(x))^{\frac{1}{n+2}}. 
	\end{equation} 
And,  see \cite{Schutt:2003}, locally around $z_x$,   $\mathsf{epi}(\psi)$ can be approximated 
by  $\mathbf{B}_2^{n+1}(z_x-\rho N_{\psi}(z_x),\rho)$. 
\par	
	We now describe the approximation in more details. Let $ \varepsilon >0$ be a given small enough number.
	For simplicity, let  \begin{align*}
	\mathcal{B}&=\mathbf{B}_2^{n+1}(z_x-\rho N_{\psi}(z_x),\rho),\\ 
	\mathcal{B}(\varepsilon^{-})&=\mathbf{B}_2^{n+1}(z_x-(1-\varepsilon)\rho N_{\psi}(z_x),(1-\varepsilon)\rho),\\
	\mathcal{B}(\varepsilon^{+})&=\mathbf{B}_2^{n+1}(z_x-(1+\varepsilon)\rho N_{\psi}(z_x),(1+\varepsilon)\rho).
	\end{align*}    
It  is easily checked that  there is a  $\Xi_{\varepsilon}>0$  such that, (see e.g., the proof of  \cite[Lemma 23]{SchuettWerner:2004}), for all $0<t\leq\Xi_{\varepsilon}$, \begin{align} \mathsf{H}^{-}(-N_{\psi}(z_x),t)\cap\mathcal{B}(\varepsilon^{-})  \subseteq\mathsf{H}^{-}(-N_{\psi}(z_x),t)\cap\mathsf{epi}(\psi) 
	\subseteq\mathsf{H}^{-}(-N_{\psi}(z_x),t)\cap\mathcal{B}(\varepsilon^{+}). \label{equ0} 
	\end{align} This further implies  
	\begin{equation} \label{key}
	\begin{aligned}
	\mathsf{M}_{\delta}\big(\mathsf{H}^{-}(-N_{\psi}(z_x),t)\cap\mathcal{B}(\varepsilon^{-})\big) &\subseteq\mathsf{M}_{\delta}\big(\mathsf{H}^{-}(-N_{\psi}(z_x),t)\cap\mathsf{epi}(\psi)\big)\\&
	\subseteq\mathsf{M}_{\delta}\big(\mathsf{H}^{-}(-N_{\psi}(z_x),t)\cap\mathcal{B}(\varepsilon^{+})\big).
	\end{aligned}
	\end{equation}
\par	
	Although Lemma \ref{lem: UnaffectedCut} was proved for convex bodies, it can be checked that the same  result also holds for $\mathsf{epi}(\psi)$ with $z_x\in \partial \mathsf{epi}(\psi)$. Indeed, Lemma \ref{lem: UnaffectedCut} is local around the point $z_x$. 
	Thus we apply Lemma \ref{lem: UnaffectedCut} with $t=\Xi_{\varepsilon}$ to $\mathcal{B}(\varepsilon^{-}), \mathsf{epi}(\psi)$ and $\mathcal{B}(\varepsilon^{+})$, respectively. Hence  there is $r>0$ such that for all $\delta>0$,
	\begin{equation}\label{le0}
	\mathsf{M}_{\delta}\big(\mathcal{B}(\varepsilon^{-})\cap\mathsf{H}^{-}(-N_{\psi}(z_x),\Xi_{\varepsilon})\big)\cap\mathbf{B}_2^{n+1}(z_x,r)
	=\mathsf{M}_{\delta}(\mathcal{B}(\varepsilon^{-}))\cap\mathbf{B}_2^{n+1}(z_x,r),
	\end{equation}
	\begin{equation}\label{le1}
	\mathsf{M}_{\delta}\big(\mathsf{epi}(\psi) \cap\mathsf{H}^{-}(-N_{\psi}(z_x),\Xi_{\varepsilon})\big)\cap\mathbf{B}_2^{n+1}(z_x,r)
	=\mathsf{M}_{\delta}(\mathsf{epi}(\psi))\cap\mathbf{B}_2^{n+1}(z_x,r),
	\end{equation}
	and 
	\begin{equation}\label{le2}
	\mathsf{M}_{\delta}\big(\mathcal{B}(\varepsilon^{+})\cap\mathsf{H}^{-}(-N_{\psi}(z_x),\Xi_{\varepsilon})\big)\cap\mathbf{B}_2^{n+1}(z_x,r)
	=\mathsf{M}_{\delta}(\mathcal{B}(\varepsilon^{+}))\cap\mathbf{B}_2^{n+1}(z_x,r).
	\end{equation}
	
	We choose $\Theta_{\delta}<\Xi_{\varepsilon}$ small enough that
	\begin{equation}\label{1}
	\mathsf{H} ^{-}(-N_{\psi}(z_x), \Theta_{\delta})\subseteq \mathsf{H}^{-}(-N_{\psi}(z_x),\Xi_{\varepsilon})\ \ \mathrm{and}\ \ \mathsf{H} ^{-}(-N_{\psi}(z_x), \Theta_{\delta}) \cap\mathcal
	{B}(\varepsilon^{+})\subseteq\mathbf{B}_2^{n+1}(z_x,r).
	\end{equation} 
	We intersect  both sides of \eqref{le0}, \eqref{le1} and \eqref{le2} with sets $\mathsf{H} ^{-}(-N_{\psi}(z_x), \Theta_{\delta})\cap\mathcal
	{B}(\varepsilon^{-})$, $\mathsf{H} ^{-}(-N_{\psi}(z_x), \Theta_{\delta})\cap\mathsf{epi}(\psi)$ and $\mathsf{H} ^{-}(-N_{\psi}(z_x), \Theta_{\delta})\cap\mathcal
	{B}(\varepsilon^{+})$, respectively. Thus
	\[
	\mathsf{M}_{\delta}\big(\mathcal{B}(\varepsilon^{-})\cap\mathsf{H}^{-}(-N_{\psi}(z_x),\Xi_{\varepsilon})\big)\cap\mathsf{H} ^{-}(-N_{\psi}(z_x), \Theta_{\delta})
	=\mathsf{M}_{\delta}(\mathcal{B}(\varepsilon^{-}))\cap\mathsf{H} ^{-}(-N_{\psi}(z_x), \Theta_{\delta}),
	\]
	\[
	\mathsf{M}_{\delta}\big(\mathsf{epi}(\psi) \cap\mathsf{H}^{-}(-N_{\psi}(z_x),\Xi_{\varepsilon})\big)\cap\mathsf{H} ^{-}(-N_{\psi}(z_x), \Theta_{\delta})
	=\mathsf{M}_{\delta}(\mathsf{epi}(\psi))\cap\mathsf{H} ^{-}(-N_{\psi}(z_x), \Theta_{\delta})
	\]
	and
	\[
	\mathsf{M}_{\delta}\big(\mathcal{B}(\varepsilon^{+})\cap\mathsf{H}^{-}(-N_{\psi}(z_x),\Xi_{\varepsilon})\big)\cap\mathsf{H} ^{-}(-N_{\psi}(z_x), \Theta_{\delta})
	=\mathsf{M}_{\delta}(\mathcal{B}(\varepsilon^{+}))\cap\mathsf{H} ^{-}(-N_{\psi}(z_x), \Theta_{\delta}).
	\]
	By  \eqref{key} we get
	\[\begin{aligned}
	\mathsf{M}_{\delta}(\mathcal{B}(\varepsilon^{-}))\cap \mathsf{H} ^{-}(-N_{\psi}(z_x), \Theta_{\delta}) &\subseteq\mathsf{M}_{\delta}
	(\mathsf{epi}(\psi))\cap\mathsf{H} ^{-}(-N_{\psi}(z_x), \Theta_{\delta}) \\&
	\subseteq\mathsf{M}_{\delta}(\mathcal{B}(\varepsilon^{+}))\cap
	\mathsf{H} ^{-}(-N_{\psi}(z_x), \Theta_{\delta}).
	\end{aligned}\] 
	Let $z_{\delta}=(x,\mathsf{M}_{\delta}(\psi)(x))$. Choose $\delta$ so small that $z_{\delta}\in\mathsf{H} ^{-}(-N_{\psi}(z_x), \Theta_{\delta})$ and $z_{\delta}\in\mathcal{B}(\varepsilon^{-})$.
	Clearly, in Figure \ref{figure-2} we see that 
	\begin{align} z_{\delta} \in\mathsf{int}\big(\mathsf{M}_{\delta}(\mathcal{B}(\varepsilon^{+}))\cap
	\mathsf{H} ^{-}(-N_{\psi}(z_x), \Theta_{\delta})\big) \ \ \mathrm{and} \ \  z_{\delta} \notin\mathsf{int}\big(\mathsf{M}_{\delta}(\mathcal{B}(\varepsilon^{-}))\cap
	\mathsf{H} ^{-}(-N_{\psi}(z_x), \Theta_{\delta})\big). \label{location-B}
	\end{align}

\begin{figure}[htpb] 
	\centering
	\begin{tikzpicture}[node distance=2cm]
	\draw[->] (-8,0) -- (8,0) node[below] {$\mathbb{R}^{n}$};
	\draw[->] (0,-0.5) -- (0,9.5) node[left] {$\mathbb{R}$};
	\draw[-] (-0.3,0) -- (-0.3,0)node[below]{$o$};
	\draw[domain=-8.5:8.5] plot(\x, 1/14*\x*\x+1/14)node[right]{$\mathsf{epi}(\psi)$};
	\filldraw[fill=black] (-1,0) circle (0.06) node[below] {$x$};
	\filldraw[fill=black] (-1,1/7) circle (0.06) node[right] {$z_x$};
	\draw[dashed,-] (-1,0)--(-1,1/7);
	\draw[dashed,-] (-2,2/7)-- (0,0);
	\draw[dashed,->] (-1,1/7)-- (1/3,199/21) node[right] {$-N_{\psi}(z_x)$};
	\draw[-] (-5.96,5.86) arc (-180:0:50^0.5/49*40) node[below] {$\mathcal{B}(\varepsilon^{-})$};
	\draw[-] (-8,6.01) arc (-161.8:-27:50^0.5/49*60) node[below] {$\mathcal{B}(\varepsilon^{+})$};
	\draw[dashed,-] (-8,37/14)-- (7,0.5) node[above] {$\mathsf{H}^{-}(-N_{\psi}(z_x),\Theta_{\delta})$};
	\filldraw[fill=black] (-1,4/7) circle (0.06) node[right] {$z_{\delta}$};
	\draw[-] (-1,0)--(-1,3.5/7);
	\draw[dashed,-] (-5.38,5.86) arc (-180:0:50^0.5/49*36) node[above] {$\mathsf{M}_{\delta}(\mathcal{B}(\varepsilon^{-}))$};
	\draw[dashed,-] (-7.7258,6.1) arc (-161.8:-27:50^0.5/49*58) node[above] {$\mathsf{M}_{\delta}(\mathcal{B}(\varepsilon^{+}))$};
	\filldraw[fill=black] (-9/49,41/7) circle (0.06) node[right] {$z_x-(1-\varepsilon)\rho N_{\psi}(z_x)$};
	\filldraw[fill=black] (11/49,61/7) circle (0.06) node[right] {$z_x-(1+\varepsilon)\rho N_{\psi}(z_x)$};
	\end{tikzpicture} 
	\caption{}  \label{figure-2}
\end{figure} 
\vskip 2mm	
	
	Denote by $\Delta_{\rho^{+}}$ the difference of the radii of $\mathcal{B}(\varepsilon^{+})$ and $\mathsf{M}_{\delta}(\mathcal{B}(\varepsilon^{+}))$. Let 
	$\Delta_{h^{+}}$ be the height of a  cap of 
	$\mathcal{B}(\varepsilon^{+})$ which has volume $\delta$.
	It follows from Lemma \ref{11} that \[
	\lim_{\delta\to0^+}\frac{\Delta_{h^{+}}}{\Delta_{\rho^{+}}}=
	\frac{n+4}{n+2}.
	\]
	Hence, for $ \varepsilon >0$ small enough, there is  a constant  $\delta_2=\delta_2(x, \varepsilon)$ such that for all  $0<\delta\leq\delta_2$,	
	\begin{equation}\label{equat}
	\frac{n+4}{n+2}-\varepsilon\leq\frac{\Delta_{h^{+}}}{\Delta_{\rho^{+}}}\leq \frac{n+4}{n+2}+\varepsilon. 
	\end{equation}   Lemma \ref{lemma1} yields that 
	\begin{align}\label{control-delta-1} 
	\delta\leq  \big(d_{n+1}\big)^{-\frac{n+2}{2}} ((1+\varepsilon)\rho)^{\frac{n}{2}}
	(\Delta_{h^{+}})^{\frac{n+2}{2}}.	\end{align} 
	Moreover, we assume that for all $0<\delta\leq\delta_2$,
	\[
	\mathsf{dist}\big(z_{\delta}, \big(\mathcal{B}(\varepsilon^{+})\big)^c\big)-\mathsf{dist}\big(z_{\delta}, \mathcal{B}^c\big)\leq\varepsilon \, \mathsf{dist}\big(z_{\delta}, \mathcal{B}^c\big).
	\]
	From \eqref{location-B}, one gets \begin{align}\Delta_{ \rho^+} \leq \mathsf{dist}\big(z_{\delta}, \big(\mathcal{B}(\varepsilon^{+})\big)^c\big) \leq (1+\varepsilon) \mathsf{dist}\big(z_{\delta}, \mathcal{B}^c\big) . \label{control-rho+}   \end{align} 
	This, together with \eqref{equat},  \eqref{control-delta-1} and \eqref{control-rho+}, imply that 
	\begin{align}\label{control-delta-2} 
	d_{n+1} \delta^{\frac{2}{n+2}}  \leq    
	\big((1+\varepsilon)\rho\big)^{\frac{n}{n+2}}   \Big(\frac{n+4}{n+2}+\varepsilon\Big)\big( (1+\varepsilon) \mathsf{dist}(z_{\delta},  \mathcal{B}^c)\big).
	\end{align} 
	Applying Lemma \ref{lemma2}  with $o$, $z$, $-e_{n+1}$ and $z_n$ replaced by 
	$z_x$, $z_{\delta}$, $N_{\psi}(z_x)$ and
	$\big(\mathsf{M}_{\delta}(\psi)(x)-\psi(x)\big)\left<-N_{\psi}(z_x),e_{n+1}\right>$, respectively, 
	one gets that for $\delta>0$ small enough,  	 
	\begin{equation}
	\big(\mathsf{M}_{\delta}(\psi)(x)-\psi(x)\big)\left<-N_{\psi}(z_x),e_{n+1}\right>\ge \mathsf{dist}(z_{\delta},  \mathcal{B}^c).
	\end{equation}    
	Together with   \eqref{w0} and \eqref{control-delta-2}, we obtain the  following inequality 
	\begin{align*}  \frac{\mathsf{M}_{\delta}(\psi)(x)-\psi(x)}{d_{n+1} \delta^\frac{2}{n+2}}&\geq  \big((1+\varepsilon)\rho\big)
	^{-\frac{n}{n+2}} \Big[ \Big(\frac{n+4}{n+2}+\varepsilon\Big) \cdot  \frac{\big( (1+\varepsilon)\mathsf{dist}(z_{\delta},  \mathcal{B}^c)\big)}  { \mathsf{dist}(z_{\delta},  \mathcal{B}^c)(1+\|\nabla\psi(x)\|^2)^{\frac{1}{2}}}\Big]^{-1}   \nonumber \\&
	\geq \big((1+\varepsilon)\rho\big)
	^{-\frac{n}{n+2}} \Big(\frac{n+4}{n+2} \Big)^{-1} \bigg(1-  \Big( \frac{n+2} {n+4}\Big) \varepsilon\bigg)\frac{(1+\|\nabla\psi(x)\|^2)^{\frac{1}{2}}}{1+\varepsilon}\\&
	\ge(1+\varepsilon)^{-\frac{2n+2}{n+2}} \Big(\frac{n+4}{n+2} \Big)^{-1} \bigg(1-  \Big( \frac{n+2} {n+4}\Big) \varepsilon\bigg)\det(\nabla^2\psi(x))^{\frac{1}{n+2}}. 
	\end{align*} 
After rearrangement, and using  the relation between $c_{n+1}$ and $d_{n+1}$ (see \eqref{003} and \eqref{003-d}, respectively), one gets for all  $0<\delta\leq\delta_2$,
	\begin{align}
	\frac{\mathsf{M}_{\delta}(\psi)(x)-\psi(x)}{c_{n+1} \delta^\frac{2}{n+2}}  
	\geq  (1-\varepsilon)^3    \big(\det(\nabla^2\psi(x))\big) ^{\frac{1}{n+2}}, \label{upper-difference} 
	\end{align}
 where we have used $\frac{1}{1+a}\geq 1-a$ for all $a\in (0, 1)$.
\par 		
Let us now prove the upper bound in \eqref{control-1}.  Denote by 
$\Delta_{{\rho}^{-}}$ the difference of the radii of $\mathcal{B}(\varepsilon^{-})$ and
$\mathsf{M}_{\delta}(\mathcal{B}(\varepsilon^{-}))$. Let $\Delta_{h^{-}}$
be the height of a  cap of $\mathcal{B}(\varepsilon^{-})$ which has volume $\delta$.
It follows from Lemma \ref{11} that \[
	\lim_{\delta\to0^+}\frac{\Delta_{h^{-}}}{\Delta_{\rho^{-}}}=
	\frac{n+4}{n+2}.
	\]
 Hence, for $ \varepsilon>0$ small enough, there is  a constant  $\delta_3=\delta_3(x, \varepsilon)$ such that for all  $0<\delta\leq\delta_3$,	
	\begin{equation}\label{equqat1}
		\frac{n+4}{n+2}-\varepsilon\leq\frac{\Delta_{h^{-}}}{\Delta_{\rho^{-}}}\leq \frac{n+4}{n+2}+\varepsilon. 
	\end{equation}  
	Lemma \ref{lemma1} yields that 
	\begin{equation}
	\delta\ge \big(d_{n+1}\big)^{-\frac{n+2}{2}} \rho^{\frac{n}{2}}(1-\varepsilon)^{\frac{n}{2}}
	\Big(1-\frac{\Delta_{h^{-}}}{2(1-\varepsilon)\rho}\Big)^\frac{n}{2}
	(\Delta_{h^{-}})^\frac{n+2}{2}. \label{vol-12-31}
	\end{equation} 
		Moreover, we assume that for all $0<\delta\leq\delta_3$,
	\[
	 \mathsf{dist}\big(z_{\delta}, \mathcal{B}^c\big)-\mathsf{dist}\big(z_{\delta},\big(\mathcal{B}(\varepsilon^{-})\big)^c\big)\leq\varepsilon\, \mathsf{dist}\big(z_{\delta}, \mathcal{B}^c\big)
	\]
	and $\Delta_{h^{-}}<\rho \varepsilon$.
	From \eqref{location-B}, one gets \begin{align}\Delta_{ \rho^-} \ge \mathsf{dist}\big(z_{\delta}, \big(\mathcal{B}(\varepsilon^{-})\big)^c\big) \ge (1-\varepsilon) \mathsf{dist}\big(z_{\delta}, \mathcal{B}^c\big) . \label{control-rho-}   \end{align}
	This, together with \eqref{equqat1},   \eqref{vol-12-31}, \eqref{control-rho-}, $\Delta_{h^{-}}<\rho \varepsilon$ and  $ \varepsilon \in (0, (n+1)^{-2})$, imply that 
	\begin{align}
		  d_{n+1}   \delta^{\frac{2}{n+2}}& 
		  \geq \rho^{\frac{n}{n+2}}(1-\varepsilon)^{\frac{n}{n+2}}
		  \Big(1-\frac{\Delta_{h^{-}}}{2(1-\varepsilon)\rho}\Big)^\frac{n}{n+2}\Big(\frac{n+4}{n+2}-\varepsilon\Big)\big( (1-\varepsilon) \mathsf{dist}(z_{\delta},  \mathcal{B}^c)\big) \nonumber\\&
		  \geq \rho^{\frac{n}{n+2}}(1-\varepsilon)^{\frac{2n+2}{n+2}}
		\Big(1-\frac{\Delta_{h^{-}}}{\rho}\Big)^\frac{n}{n+2}
		\Big(\frac{n+4}{n+2}\Big) \Big(1-\frac{(n+2)\varepsilon}{n+4}\Big)\mathsf{dist}(z_{\delta},  \mathcal{B}^c) \nonumber\\ &
		\geq \rho^{\frac{n}{n+2}}(1-\varepsilon)^{\frac{4n+4}{n+2}}
		\Big(\frac{n+4}{n+2}\Big)\mathsf{dist}(z_{\delta},  \mathcal{B}^c).\label{control-delta-3}
	\end{align} 
		Applying  again Lemma \ref{lemma2}, 
	one gets that for $\delta>0$ small enough,  	 
	\begin{align}
	\big(\mathsf{M}_{\delta}(\psi)(x)-\psi(x)\big)\left<-N_{\psi}(z_x),e_{n+1}\right> &\leq \mathsf{dist}(z_{\delta}, 
\mathcal{B}^c)\Big(1+\frac{2\mathsf{dist}(z_{\delta}, 
	\mathcal{B}^c)}{\rho\left<-N_{\psi}(z_x),e_{n+1}\right>^2}\Big) \nonumber \\ &\leq
\mathsf{dist}(z_{\delta}, \mathcal{B}^c)(1+\varepsilon) \label{re1}.
	\end{align}   
	Together with \eqref{w0}, \eqref{control-delta-3} and the relation between $c_{n+1}$ and $d_{n+1}$ (see \eqref{003} and \eqref{003-d}, respectively), one has for all  $0<\delta\leq\delta_3$, 
		\begin{align}
			\frac{\mathsf{M}_{\delta}(\psi)(x)-\psi(x)}{c_{n+1} \delta^\frac{2}{n+2}}&
			\leq \rho^{-\frac{n}{n+2}}
			(1-\varepsilon)^{-\frac{4n+4}{n+2}}(1+\varepsilon)(1+\|\nabla\psi(x)\|^2)^{\frac{1}{2}}\nonumber\\&
			\leq (1-\varepsilon)^{-5}\det(\nabla^2\psi(x))^{\frac{1}{n+2}}, \label{right-control-1}
		\end{align}
  where we have in the second inequality again used that $\frac{1}{1+a}\geq 1-a$ for all $a\in (0, 1)$. 
 In conclusion, \eqref{upper-difference}  and \eqref{right-control-1} give the desired inequality \eqref{control-1} with $\delta_1=\min\{\delta_2, \delta_3\}$ under the  assumption that  the indicatrix of Dupin at $z_x\in\partial\mathsf{epi}(\psi)$ is a sphere. 
\par

If the indicatrix of Dupin at $z_x\in\partial\mathsf{epi}(\psi)$ is an ellipsoid
	 instead of a sphere,  one can find a volume  preserving affine transformation which maps the indicatrix of Dupin at $z_x\in\partial\mathsf{epi}(\psi)$ into a sphere. 
	 As the determinant remains unchanged under a volume preserving affine transformation, 
	 it is easily checked that the desired inequality \eqref{control-1} still holds if the indicatrix of Dupin at $z_x\in\partial\mathsf{epi}(\psi)$ is the boundary of an ellipsoid instead of a sphere, 
	 due to,  for instance,  Proposition \ref{affine-inv-map}. This completes the proof of the desired inequality \eqref{control-1}, which in turn implies the limit \eqref{desied-lim-1}.  
		
\vskip 2mm 
\noindent {\it Case 2: Assume that   $\det(\nabla^2\psi(x))=0$ and $N_{\psi}(z_x)$
	exists uniquely at $x\in\mathbb{R}^n.$ } 
\par
\noindent	
	In this case, \eqref{desied-lim-1}  follows immediately from the following claim: for all $\varepsilon>0$ small enough, there is
	$\delta_4=\delta_4(x,\varepsilon)$ such that for all  $0<\delta\leq\delta_4$, one has 
	\begin{align}
	0\leq\frac{\mathsf{M}_{\delta}(\psi)(x)-\psi(x)}{\delta^{\frac{2}{n+2}}}\leq  b_0\, \varepsilon ^{\frac{n}{n+2}} (1-\varepsilon)^{-3}. \label{control-2}
	\end{align}	 
As $\det(\nabla^2\psi(x))=0$,  the indicatrix of Dupin at $z_x$ is the surface of an elliptic cylinder. This provides again an  approximation of $\partial (\mathsf{epi}(\psi))$ around $z_x$. Indeed, for any $\varepsilon >0 $ small enough, there is an  ellipsoid $\widetilde{\mathcal{E}}$ and a constant $\Xi_{\varepsilon}>0$ such that, (without loss of generality) the lengths of the  first $k$ principal axes of
	$\widetilde{\mathcal{E} }$ are larger than $ \varepsilon^{-1}$, and  for all $0<s\leq\Xi_{\varepsilon},$
	\begin{equation}
		\widetilde{\mathcal{E} } \cap\mathsf{H}^{-}(-N_{\psi}(z_x),s)\subseteq
		\mathsf{epi}(\psi)\cap\mathsf{H}^{-}(-N_{\psi}(z_x),s). \label{det=0-1}
	\end{equation} 
	A proof of this argument can be found in  the proof of 
	\cite[Lemma 23]{SchuettWerner:2004}. There,  the corresponding  statement  was proved for convex bodies but the argument given there 
	 works for unbounded convex set $ \mathsf{epi}(\psi)$ as well.     
\par	 
	  Let $\delta>0$ be small enough and  $0<\Theta_{\delta}<\Xi_{\varepsilon}$ be such that  
	  $$z_{\delta}=(x, \mathsf{M}_{\delta}(\psi)(x))\in\partial\mathsf{M}_{\delta}(\mathsf{epi}(\psi))\cap
	\mathsf{H}^{-}(-N_{\psi}(z_x),\Theta_ \delta).
	$$
	It is enough to prove the claim when $z_{\delta}\in \mathsf{int}(\mathsf{epi}(\psi))$  for all $\delta>0$. If that is not the case,  there is nothing to prove as one  finds a 
	constant $\delta_4>0$ such that $\mathsf{M}_{\delta}(\psi)(x)-\psi(x)=0$ for all $\delta\in (0, \delta_4)$. 
\par
 Let $z_{\delta} \in \mathsf{int}(\mathsf{epi}(\psi))$  for all $\delta>0$ small enough. Assume that  the approximating ellipsoid $\widetilde{\mathcal{E} }$ is a Euclidean ball with radius $\rho$  such that  $\rho\geq  \varepsilon^{-1}$. 
 Let $\widetilde{\Delta}_{h}$ denote the height of a cap of 
 $\widetilde{\mathcal{E}}$ which has volume $\delta$ and let $\widetilde{\Delta}_{\rho}$ be the difference of the radii of
 $\widetilde{\mathcal{E}}$ and $\mathsf{M}_{\delta}(\widetilde{\mathcal{E}}).$
 There exists $\delta_4=\delta_4(x, \varepsilon)$ such that, for all $0<\delta<\delta_4$, 
 \begin{equation}\label{equqat1-22-1}
		\frac{n+4}{n+2}-\varepsilon\leq\frac{\widetilde{\Delta}_{h}}{\widetilde{\Delta}_{\rho}}\leq \frac{n+4}{n+2}+\varepsilon.
	\end{equation} 
	 Moreover, as $\rho\geq  \varepsilon^{-1}$ and by   Lemma \ref{lemma1}, one gets  
	 \begin{align}\label{ineq-12-23-12}
	 \Big(1-\frac{\widetilde{\Delta}_{h}}{2\rho}\Big)^{\frac{n}{n+2}}\widetilde{\Delta}_{h} \leq d_{n+1}\cdot \delta^{\frac{2}{n+2}}\Big(\frac{1}{\rho}\Big)^{\frac{n}{n+2}} \leq  d_{n+1}\cdot  \delta^{\frac{2}{n+2}}  \varepsilon^{\frac{n}{n+2}}. 
	\end{align}    Thus, we can also assume that $\widetilde{\Delta}_{h}<2 \varepsilon$ for all $0<\delta<\delta_4$. 
Applying  again Lemma \ref{lemma2}, \eqref{re1} holds with $\mathcal{B}$
replaced by $\widetilde{\mathcal{E}}$.  That is, for $\delta>0$ small enough,  	 
\begin{equation*} 
\big(\mathsf{M}_{\delta}(\psi)(x)-\psi(x)\big)\left<-N_{\psi}(z_x),e_{n+1}\right>\leq \mathsf{dist}(z_{\delta},  \widetilde{\mathcal{E}}^c) \cdot \Big(1+\frac{2\mathsf{dist}(z_{\delta},  \widetilde{\mathcal{E}}^c)}
{\rho\left<-N_{\psi}(z_x),e_{n+1}\right>^2}\Big)\leq
\mathsf{dist}(z_{\delta},  \widetilde{\mathcal{E}}^c)(1+\varepsilon).
\end{equation*} 
 Note that $\widetilde{\Delta}_{\rho}> \mathsf{dist}(z_{\delta},  \widetilde{\mathcal{E}}^c)$.  
 For $\varepsilon >0$ small enough and  as $\rho\geq  \varepsilon^{-1}$, one has  with \eqref{003},  \eqref{003-d}, \eqref{equqat1-22-1} and \eqref{ineq-12-23-12}
that  for all  $0<\delta\leq\delta_4$, 
\begin{align}
\frac{\mathsf{M}_{\delta}(\psi)(x)-\psi(x)}{c_{n+1} \delta^\frac{2}{n+2}}&\leq b_0\varepsilon ^{\frac{n}{n+2}}
 \Big(1-\frac{\widetilde{\Delta}_{h^{-}}}{2\rho}\Big)^{-\frac{n}{n+2}}  \Big( \frac{d_{n+1}\mathsf{dist}(z_{\delta},  \widetilde{\mathcal{E}}^c) }{c_{n+1} \widetilde{\Delta}_{h} } \Big)
 (1+\varepsilon) \nonumber \\&\leq  b_0\varepsilon ^{\frac{n}{n+2}}  (1-\varepsilon^2)^{-\frac{n}{n+2}}  \Big(1-\frac{(n+2)\varepsilon}{n+4}\Big)^{-1}(1+\varepsilon) 
 \nonumber \\&\leq b_0\varepsilon ^{\frac{n}{n+2}}(1-\varepsilon)^{-\frac{3n+4}{n+2}}\leq
 b_0\varepsilon ^{\frac{n}{n+2}}(1-\varepsilon)^{-3}, \label{right-control-1-222}
\end{align} 
where $b_0=\left<-N_{\psi}(z_x),e_{n+1}\right>^{-1}$.	
This implies the desired inequality  \eqref{control-2}  for all  $0<\delta\leq\delta_4$  under the assumption that  $\widetilde{\mathcal{E} }$ is a Euclidean ball.  
If $\widetilde{\mathcal{E} }$ is an  ellipsoid, then a volume  preserving affine transformation  maps $\widetilde{\mathcal{E} }$ into a Euclidean ball and we conclude as above.

\par
This finishes   the proof of  inequality  \eqref{control-2}, which in turn implies the limit \eqref{desied-lim-1}.   \end{proof}

\bibliography{2}

\end{document}